\long\def\symbolfootnote[#1]#2{\begingroup%
\def\thefootnote{\fnsymbol{footnote}}\footnote[#1]{#2}\endgroup}
\newtheorem{theorem}{Theorem}[section]
\theoremstyle{plain}
\newtheorem{case}{Case}
\newtheorem{subcase}{Subcase}[theorem]
\numberwithin{subcase}{case}
\newtheorem{claim}{Claim}
\newtheorem{lemma}[theorem]{Lemma}
\newtheorem{proposition}[theorem]{Proposition}
\begin{document}

\author{Karl Heuer}

\symbolfootnote[0]{\textcopyright 2019. This manuscript version is made available under the CC-BY-NC-ND 4.0 license \url{http://creativecommons.org/licenses/by-nc-nd/4.0/}}

\title[]{A sufficient local degree condition for Hamiltonicity in locally finite claw-free graphs}

\begin{abstract}

Among the well-known sufficient degree conditions for the Hamiltonicity of a finite graph, the condition of Asratian and Khachatrian is the weakest and thus gives the strongest result.
Diestel conjectured that it should extend to locally finite infinite graphs~$G$, in that the same condition implies that the Freudenthal compactification of $G$ contains a circle through all its vertices and ends.
We prove Diestel's conjecture for claw-free graphs.
\end{abstract}

\maketitle

\section{Introduction}

Problems concerning the existence of Hamilton cycles in finite graphs are studied quite a lot, but to decide whether a given finite graph is Hamiltonian is difficult.
Nevertheless, or even because of that, many sufficient or necessary conditions for Hamiltonicity have been found which are often easy to handle.
One common class of sufficient conditions are degree conditions.
An early result in this area is the following theorem of Dirac (1952).

\begin{theorem}\label{dirac-theorem}\cite[Thm.\ 3]{dirac}
Every finite graph with ${n \geq 3}$ vertices and minimum degree at least ${n / 2}$ is Hamiltonian.
\end{theorem}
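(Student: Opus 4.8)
The plan is to run the classical longest-path rotation argument. I would begin with two easy consequences of the hypothesis. First, the minimum degree is in fact at least $\lceil n/2\rceil$ (degrees are integers), and since $n\ge 3$ this is at least $2$. Second, $G$ is connected: a connected component on $m$ vertices forces every vertex in it to have degree at most $m-1$, whence $m-1\ge n/2$, i.e.\ $m>n/2$; but two vertex-disjoint sets of more than $n/2$ vertices cannot both fit into an $n$-vertex graph.

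Now let $P=v_1v_2\cdots v_k$ be a path in $G$ of maximum length; then $k\ge 3$ (the end-vertices already have degree at least $2$), and by maximality every neighbour of $v_1$ and every neighbour of $v_k$ lies on $P$. The key step is to convert $P$ into a cycle on the vertex set $V(P)$. Set $A=\{\,i\in\{1,\dots,k-1\}:v_1v_{i+1}\in E(G)\,\}$ and $B=\{\,i\in\{1,\dots,k-1\}:v_iv_k\in E(G)\,\}$. Since all neighbours of $v_1$ and of $v_k$ lie on $P$, the maps $i\mapsto v_{i+1}$ and $i\mapsto v_i$ show $|A|=\deg(v_1)$ and $|B|=\deg(v_k)$, so $|A|+|B|\ge 2\lceil n/2\rceil\ge n\ge k>k-1=|\{1,\dots,k-1\}|$. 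Hence there is an index $i\in A\cap B$, and then $C:=v_1v_2\cdots v_iv_kv_{k-1}\cdots v_{i+1}v_1$ is a cycle with $V(C)=V(P)$.

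Finally I would show that $C$ is spanning. If it were not, then $k<n$, so connectedness yields a vertex $w\notin V(C)$ with a neighbour $v_j\in V(C)$; deleting from $C$ one of the two edges incident with $v_j$ gives a path on $V(C)$ ending at $v_j$, and prepending $w$ produces a path on $k+1$ vertices, contradicting the maximality of $P$. Thus $k=n$ and $C$ is a Hamilton cycle. I expect the only genuine content to sit in the middle paragraph: using $k\le n$ together with integrality of degrees to force the two ``attachment'' index sets $A$ and $B$ to meet, and verifying that the spliced closed walk is genuinely a cycle through each vertex of $P$ exactly once; the connectedness observation and the path-extension step at the end are routine.
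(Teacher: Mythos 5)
Your argument is correct and complete: the connectivity observation, the pigeonhole argument on the index sets $A$ and $B$ forcing a crossing pair, and the absorption step via maximality of the longest path are all sound, and this is exactly the classical proof of Dirac's theorem. The paper itself only cites this result without proof, so there is no alternative argument in the text to compare against.
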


The next result generalizes the theorem of Dirac and is due to Ore (1960).

\begin{theorem}\label{ore-theorem}\cite[Thm.\ 2]{ore}.
Let $G$ be a finite graph with ${n \geq 3}$ vertices. If \linebreak ${d(u) + d(v) \geq n}$ for any two non-adjacent vertices $u$ and $v$ of $G$, then $G$ is Hamiltonian.
\end{theorem}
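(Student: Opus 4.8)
The plan is to argue by contradiction using the classical edge-maximality reduction. Suppose $G$ satisfies the hypothesis but is not Hamiltonian. I would first note that the degree-sum condition is preserved under adding edges: if $d(u)+d(v)\geq n$ holds for every pair of non-adjacent vertices, it continues to hold in any supergraph on the same vertex set (degrees only grow, and the set of non-adjacent pairs only shrinks). Hence I may add missing edges to $G$, one at a time, as long as the result stays non-Hamiltonian, until I reach an edge-maximal non-Hamiltonian graph $G'$ on the same $n$ vertices, which still satisfies the hypothesis and has the extra feature that adding any non-edge produces a Hamilton cycle.

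Since $K_n$ is Hamiltonian for $n\geq 3$, the graph $G'$ is not complete, so it has two non-adjacent vertices $u$ and $v$. By maximality $G'+uv$ has a Hamilton cycle, and that cycle must traverse the new edge $uv$; deleting $uv$ from it leaves a Hamilton path $v_1v_2\cdots v_n$ of $G'$ with $v_1=u$ and $v_n=v$. The heart of the argument is then a rotation/pigeonhole step along this path: I would consider the index sets $S=\{\,i\in\{1,\dots,n-1\}: uv_{i+1}\in E(G')\,\}$ and $T=\{\,i\in\{1,\dots,n-1\}: v_iv\in E(G')\,\}$, so that $|S|=d(u)$ and $|T|=d(v)$. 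Both sets live inside a universe of size $n-1$, while $|S|+|T|=d(u)+d(v)\geq n>n-1$, so $S\cap T\neq\emptyset$. Choosing $i$ in the intersection gives the two edges $uv_{i+1}$ and $v_iv$, and then $v_1v_2\cdots v_i\,v_n v_{n-1}\cdots v_{i+1}\,v_1$ is a Hamilton cycle of $G'$, contradicting that $G'$ is non-Hamiltonian.

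There is not really a serious obstacle here: the only points that need a word of care are that this reduction uses the monotonicity of the hypothesis, and that the object produced in the last step is genuinely a cycle on all $n$ vertices — which it is, because reversing the tail $v_{i+1},\dots,v_n$ and closing up with the two chords visits each $v_j$ exactly once. (One also checks $n-1\notin S$ since $uv\notin E(G')$, so the chosen $i$ yields a well-defined reversal.) Dirac's Theorem~\ref{dirac-theorem} then follows immediately as the special case where $d(u),d(v)\geq n/2$ for all vertices.
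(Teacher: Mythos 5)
Your proof is correct: the monotonicity of the degree-sum condition under edge addition, the passage to an edge-maximal non-Hamiltonian supergraph, the Hamilton path $v_1\cdots v_n$ obtained from a Hamilton cycle of $G'+uv$, and the pigeonhole argument on the index sets $S$ and $T$ inside $\{1,\dots,n-1\}$ (with the check that $n-1\notin S$) are all sound, and the crossing edges $uv_{i+1}$, $v_iv$ do close up into a Hamilton cycle of $G'$, giving the desired contradiction. The paper itself offers no proof of this statement --- it is quoted with a citation to Ore's original note --- so there is nothing to compare against; your argument is the classical rotation/closure proof and can stand as is.
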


Both of these theorems state sufficient conditions for Hamiltonicity which involve the total number of vertices in the given graph.
So we could say that both conditions do not have a local form.
Furthermore, these conditions imply that the considered graphs have diameter at most $2$.
In contrast to this, the next theorem, which is due to Asratian and Khachatrian \cite{asra-good}, generalizes both theorems above and allows graphs of arbitrary diameter.
In order to state the theorem, we need the following local property of a graph:

\[ d(u) + d(w) \geq |N(u) \cup N(v) \cup N(w)|\textit{ for every induced path } uvw. \tag{$\ast$} \]
\\
The theorem of Asratian and Khachatrian can now be formulated as follows:

\begin{theorem}\label{Asra-fin}\cite[Thm.\ 1]{asra-good}
Every finite connected graph which satisfies $(\ast)$ and has at least three vertices is Hamiltonian.
\end{theorem}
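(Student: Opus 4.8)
The plan is to run the rotation--extension paradigm for Hamiltonicity, feeding the \emph{local} hypothesis $(\ast)$ into the rotation step where a global Ore-type bound would normally be used. First I would check that $(\ast)$ already rules out cut vertices (this also disposes of vertices of degree $1$, whose unique neighbour is a cut vertex since $n\ge 3$): if $v$ were a cut vertex, pick $a,b$ in distinct components of $G-v$ with $a\sim v\sim b$; then $avb$ is an induced path, $N(a)\cap N(b)\subseteq\{v\}$, and both $a$ and $b$ lie in $N(v)$ but not in $N(a)\cup N(b)$, so $|N(a)\cup N(v)\cup N(b)|\ge|N(a)\cup N(b)|+2\ge d(a)+d(b)+1$, contradicting $(\ast)$. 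Hence $G$ is $2$-connected; in particular $\delta(G)\ge 2$ and $G$ contains a cycle. I would also record the elementary consequence that the endpoints $u,w$ of any induced path $uvw$ have at least two common neighbours (since $u,w\in N(v)\setminus(N(u)\cup N(w))$ forces $|N(u)\cap N(w)|\ge 2$), which is handy for the bookkeeping later.

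Next, suppose for contradiction that $G$ is not Hamiltonian and let $C=c_0c_1\cdots c_{m-1}c_0$ (indices mod $m$) be a longest cycle. Since $G$ is connected and $C$ is not spanning, some vertex $x\notin V(C)$ has a neighbour on $C$; relabel so $x\sim c_0$. If $x$ had two consecutive neighbours $c_i,c_{i+1}$ on $C$, replacing the edge $c_ic_{i+1}$ by the path $c_i\,x\,c_{i+1}$ would produce a cycle longer than $C$; so $x$ has no two consecutive neighbours on $C$, and in particular $c_{m-1}\,c_0\,x$ and $c_1\,c_0\,x$ are induced paths of length two. Let $Q$ be the path $x\,c_0\,c_{m-1}\,c_{m-2}\cdots c_1$ spanning $V(C)\cup\{x\}$; its endpoints $x$ and $c_1$ are non-adjacent.

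I would then perform P\'osa rotations of $Q$ with the endpoint $c_1$ fixed. For each neighbour $c_k\ne c_0$ of $x$ on $C$, rotating along the edge $xc_k$ gives a new spanning path of $G[V(C)\cup\{x\}]$ with new endpoint $c_{k+1}$; were $c_{k+1}$ adjacent to $c_1$, this would close into a cycle on $V(C)\cup\{x\}$, contradicting the maximality of $|C|$. Thus the successor set $\{c_{k+1}:c_k\in N(x)\cap V(C)\}$, of size $|N(x)\cap V(C)|$, is disjoint from $N(c_1)$; iterating the rotations and applying the same observation to every endpoint reached this way separates $N(c_1)$ from $N(x)$ on $C$ enough that, together with $x\in N(c_0)\setminus(N(c_1)\cup N(x))$ and with the overlap $N(c_1)\cap N(x)$ absorbed by the spare common neighbours from the first paragraph, one should obtain $|N(c_1)\cup N(c_0)\cup N(x)|\ge d(c_1)+d(x)+1$, contradicting $(\ast)$ for the induced path $c_1\,c_0\,x$. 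This contradiction forces the longest cycle to be spanning, so $G$ is Hamiltonian.

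The hard part is the counting in the last step. Because $(\ast)$ is local rather than global, the neighbours of $x$ and of $c_1$ need not lie on $C$ and a single rotation does not suffice: one must iterate P\'osa rotations, track the set of reachable endpoints, invoke $(\ast)$ at each configuration, and make sure every counted vertex is charged exactly once --- against either a cycle-lengthening rerouting or a common neighbour guaranteed in the first paragraph, with no collisions. I expect this to require a secondary extremal choice (for instance, among all longest cycles one that maximises $|N(x)\cap V(C)|$, or that minimises some auxiliary parameter) and a short case split according to whether $x$ has a neighbour on $C$ other than $c_0$. Everything up to the rotation count is routine; making that count airtight is the real work.
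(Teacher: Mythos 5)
Your preliminary observations are fine: the deduction of $2$-connectedness from $(\ast)$ via an induced path $avb$ through a putative cut vertex is correct (it is an instance of the inequality $|N(u)\cap N(w)|\ge 2$ for induced paths $uvw$, which the paper isolates as Lemma~\ref{ungl-kette}), and so is the observation that a vertex $x$ outside a longest cycle $C$ has no two consecutive neighbours on $C$. But the proposal stops exactly where the proof would have to begin. The entire content of the theorem is the claim in your last paragraph that iterated P\'osa rotations plus $(\ast)$ yield $|N(c_1)\cup N(c_0)\cup N(x)|\ge d(c_1)+d(x)+1$; you do not supply the charging scheme, you do not specify the secondary extremal choice you say is needed, and you explicitly flag this count as ``the real work'' still to be done. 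A single application of $(\ast)$ at the one induced path $c_1c_0x$ cannot work: $(\ast)$ is purely local, so the rotation endpoints $c_{k+1}$ that you exclude from $N(c_1)$ live far from $c_0$ and contribute nothing to the set $N(c_1)\cup N(c_0)\cup N(x)$ you are trying to bound from below. Some genuinely new mechanism is needed to propagate the local inequality along $C$, and that mechanism is absent.

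For comparison, the paper (following Asratian--Khachatrian) does not argue by contradiction from a longest cycle at all. It proves an extension lemma (Lemma~\ref{Asra-enlarge}): for any cycle $C$ and any $v\in N(C)$, either $v$ is adjacent to two consecutive vertices of $C$, or some $u^+$ with $u\in N(v)\cap V(C)$ has a neighbour in $N(v)\setminus V(C)$, or two such successors $u^+,y^+$ are adjacent; each case yields a cycle containing $V(C)\cup\{v\}$ (possibly plus one further vertex), and repeating this absorbs all vertices. The real work lies in the proof of that lemma: it applies Lemma~\ref{ungl-kette} to \emph{every} induced path $vu_iu_i^+$ simultaneously, obtaining $|I_i|\ge|M_i|$ and $|I_i|\ge 2$ for $I_i=N(v)\cap N(u_i^+)$ and $M_i=N(u_i)\setminus(N(v)\cup N(u_i^+))$, and then runs an iterative bookkeeping construction (the sets $Y^k_i$, $Z^k_i$) whose forced termination, via the pigeonhole principle, produces the required neighbour off $C$. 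This iterated, multi-path use of $(\ast)$ is precisely the ingredient your sketch is missing; if you carried out your rotation count in full you would most likely end up reinventing it.
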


The vast majority of Hamiltonicity results deal only with finite graphs, since it is not clear what a Hamilton cycle in an infinite graph should be.
We follow the topological approach initiated by Diestel and K\"{u}hn \cite{diestel_kuehn_1, diestel_kuehn_2, diestel_kuehn_TST} and further outlined in \cite{diestel_buch, diestel_arx}, which solves this problem in a reasonable way by using as infinite cycles of a graph $G$ the circles in its Freudenthal compactification $|G|$.
Then circles which use infinitely many vertices of $G$ are possibly to exist.
Now the notion of a Hamiltonicity extends in an obvious way:
Call a locally finite connected graph $G$ \textit{Hamiltonian} if there is a circle in $|G|$ that contains all vertices of $G$.

Some Hamiltonicity results for finite graphs have already been generalized to locally finite graphs using this notion but not all of them are complete generalizations.
Theorems that involve local conditions as in Theorem~ \ref{Asra-fin} are more likely to generalize to locally finite graphs since they are still well-defined for infinite graphs and might allow compactness arguments.
For results in this field, see \cite{brewster-funk, bruhn-HC, agelos-HC, Ha_Leh_Po, heuer_Inf_ObSu, lehner-HC}.

This paper deals with a conjecture of Diestel \cite[Conj.\ 4.13]{diestel_arx} about Hamiltonicity which says that Theorem~\ref{Asra-fin} can be generalized to locally finite graphs.
The main result of this paper is the following theorem, which shows that the conjecture of Diestel holds for claw-free graphs where we call a graph \textit{claw-free} if it does not contain the claw, i.e., the graph $K_{1, 3}$, as an induced subgraph.

\begin{theorem}\label{Asra-loc-fin}
Every locally finite, connected, claw-free graph which satisfies $(\ast)$ and has at least three vertices is Hamiltonian.
\end{theorem}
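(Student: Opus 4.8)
The plan is to combine a structural analysis of claw-free graphs satisfying~$(\ast)$ with a standard exhaustion-and-compactness argument producing a circle in~$|G|$. First I would record the consequences of~$(\ast)$ that do not use claw-freeness. Chief among them: any connected graph on at least three vertices satisfying~$(\ast)$ is $2$-connected. Indeed, if $v$ were a cut vertex and $u,w$ lay in different components of $G-v$ with $u,w\in N(v)$, then $uvw$ is an induced path; here $(\ast)$ rewrites as $|N(u)\cap N(w)|\geq|N(v)\setminus(N(u)\cup N(w))|$, yet $N(u)\cap N(w)=\{v\}$ while $u,w\in N(v)\setminus(N(u)\cup N(w))$ --- a contradiction. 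I would also check that~$(\ast)$ passes to the relevant \emph{local} subgraphs, which is what makes the condition usable in a compactness argument.

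Second, I would analyse the local picture. In a claw-free graph $N(v)$ has independence number at most~$2$, so a disconnected $N(v)$ splits into exactly two cliques $A\cup\{v\}$ and $B\cup\{v\}$ meeting only in~$v$; feeding $u\in A$, $w\in B$ into~$(\ast)$ (whose right-hand side is then at least~$2$) forces $u$ and $w$ to have a common neighbour outside~$N[v]$, pinning down a narrow local configuration around such a~$v$. The goal would be a dichotomy: either $G$ is locally connected, and then Theorem~\ref{Asra-loc-fin} follows from an infinite analogue of the Oberly--Sumner theorem (see~\cite{heuer_Inf_ObSu}); or the locally disconnected vertices occur only in this controlled way and can be dealt with by a local modification --- suppressing or re-routing through the small structure they sit in --- that preserves claw-freeness and~$(\ast)$ on the rest of the graph. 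Carrying out the second branch carefully is part of the work.

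Third, for the main case I would run the exhaustion directly, in order to keep hold of the ends. Fix an exhaustion $G_1\subseteq G_2\subseteq\cdots$ of $G$ by finite connected subgraphs with $\bigcup_n G_n=G$, arranged using $2$-connectedness so that every $G_n$ is $2$-connected and $G-V(G_n)$ has no finite component. At step~$n$ I would produce a cycle $C_n$ in $G$ with $V(G_n)\subseteq V(C_n)$ that meets the frontier of each component of $G-V(G_n)$ in a prescribed way, entering and leaving each such component exactly once; this is done by applying a finite Hamilton-cycle statement for claw-free graphs carrying the local Ore-type content of~$(\ast)$ --- proved by the rotation-and-extension method of Asratian and Khachatrian, with claw-freeness used to handle the contracted boundary vertices --- to the finite graph obtained from $G$ by contracting each far component of $G-V(G_n)$ to a single vertex. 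Passing to a subsequence along which the $C_n$ stabilise on every finite vertex set (K\"onig's Infinity Lemma), the limiting edge set together with the ends it meets should be a circle through all of $V(G)$.

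The hardest part is precisely this end control. The condition~$(\ast)$ is silent about how $G$ looks near an end, so one cannot just invoke a finite theorem and take a blind limit: the $C_n$ must be chosen so that the limit passes through each end exactly once and is a genuine topological circle rather than a disjoint union of arcs and double rays. Making the finite statement strong enough to dictate the behaviour of $C_n$ on the frontier of every component of $G-V(G_n)$, and doing this compatibly in $n$ so that K\"onig's lemma yields a consistent limit, is where the claw-freeness of the frontier components and the $2$-connectedness extracted from~$(\ast)$ must be leaned on; I expect this matching-up of boundary conditions across the exhaustion to be the technical core of the proof.
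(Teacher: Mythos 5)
Your high-level outline --- grow cycles along an exhaustion, control how they cross the frontier of each far component, and extract a circle as a limit --- is the right shape, and it is essentially the shape of the paper's argument (which builds a sequence of cycles and applies Lemma~\ref{HC-extract}). But the proposal leaves the one genuinely hard step as an acknowledged to-do, and that step is the entire content of the proof. You write that the $C_n$ must enter and leave each component of $G-V(G_n)$ ``exactly once'' and that arranging this compatibly in $n$ ``is where \ldots the technical core'' lies; the paper's Lemma~\ref{Asra-cut-1} is precisely the construction that does this. It uses the structure of minimal separators in claw-free graphs (Lemma~\ref{struct_toll}: the frontier $\mathscr{S}$ splits into separators $S_1,\dots,S_k$, one per infinite component, and Lemma~\ref{complete}: the neighbourhood of a separator vertex inside a component is a clique) to route the cycle through each $S_j$ via exactly two vertices joined by a path through $K_j$, and then to prove that every later extension preserves the property $|E(C')\cap\delta(M_j)|=2$. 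Without an argument of this kind, a stabilising subsequence gives a limit in which every vertex has degree $2$ but an end may have degree $4$ or more, and the limit is an edge-disjoint union of circles and double rays rather than a single Hamilton circle. Claiming this can be ``leaned on'' is not a proof of it.

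Two concrete subsidiary problems. First, your device of contracting each far component of $G-V(G_n)$ to a single vertex and applying a finite Asratian--Khachatrian-type theorem to the quotient does not work as stated: the contracted vertex is adjacent to the whole frontier of its component, and the resulting graph need be neither claw-free nor a graph satisfying $(\ast)$ (both hypotheses are badly non-local under contraction), so there is no finite theorem to invoke. The paper avoids this by working inside $G$ itself with extension sequences (Lemma~\ref{Asra-enlarge}) and never contracting. Second, the dichotomy in your second step (locally connected, hence an Oberly--Sumner analogue from \cite{heuer_Inf_ObSu}, versus a ``local modification'' at locally disconnected vertices) does not correspond to anything that is carried out, and the second branch --- suppressing or re-routing while ``preserving claw-freeness and $(\ast)$ on the rest of the graph'' --- is exactly the kind of global preservation statement that $(\ast)$ does not obviously admit. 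Your observation that $(\ast)$ forces $2$-connectedness is correct (and a nice use of Lemma~\ref{ungl-kette}), but it is not enough to substitute for the cut-control in Lemma~\ref{Asra-cut-1}.
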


The rest of this paper is structured in the following way.
In Section~2 we recall some basic definitions and introduce some notation we shall need in this paper.
Section~3 contains some facts and lemmas which are needed in the proof of our main result.
In the last section, Section~4, we consider locally finite graphs which satisfy condition~$(\ast)$.
There we give two infinite classes of examples of locally finite graphs satisfying~$(\ast)$.
In one class, all members are claw-free, while all elements of the other class have claws as induced subgraphs.
The rest of Section~4 deals with the proof of Theorem~\ref{Asra-loc-fin}.
At the very end of the paper we discuss where we need the assumption of being claw-free for the proof of our main theorem.

\section{Basic definitions and notation}

In general, we follow the graph-theoretic notation of \cite{diestel_buch} in this paper.
For basic graph-theoretic facts, we refer the reader also to~\cite{diestel_buch}.
Beside finite graph theory, a topologically approach to infinite locally finite graphs is covered in \cite[Ch.\ 8.5]{diestel_buch}.
For a survey in this field, we refer to \cite{diestel_arx}.

All graphs considered in this paper are undirected and simple.
Furthermore, a graph is not assumed to be finite.
Now we fix an arbitrary graph $G = (V, E)$ for this section.

The graph $G$ is called \textit{locally finite} if every vertex of $G$ has only finitely many neighbours.

For a vertex set $X$ of $G$, we denote by $G[X]$ the induced subgraph graph of $G$ whose vertex set is $X$.
We write $G-X$ for the graph $G[V \setminus X]$, but for singleton sets, we omit the set brackets and write just $G-v$ instead of $G-\lbrace v \rbrace$ where $v \in V$.
We denote the cut which consists of all edges of $G$ that have one endvertex in $X$ and the other endvertex in $V \setminus X$ by $\delta(X)$.

Let $C$ be a cycle of $G$ and $u$ be a vertex of $C$.
Then we write $u^+$ and $u^-$ for the neighbour of $u$ in $C$ in positive and negative, respectively, direction of $C$ given a fixed orientation of $C$.
We will not mention that we fix an orientation for the considered cycle using this notation.
We implicitly fix an arbitrary orientation of the cycle.

Let $P$ be a path in $G$ and $T$ a tree in $G$.
We write $\mathring{P}$ for the subpath of $P$ which is obtained from $P$ by removing the endvertices of $P$.
If $s$ and $t$ are vertices of $T$, we write $sTt$ for the unique $s$--$t$ path in $T$.
Note that this covers also the case where $T$ is a path.
If $P_v = v_0 \ldots v_n$ and $P_w = w_0 \ldots w_k$ are paths in $G$ with $n, k \in \mathbb{N}$ where $v_n$ and $w_0$ may be equal but apart from that these paths are disjoint and the vertices $v_n, w_0$ are the only vertices of $P_v$ and $P_w$ which lie in $T$, then we write $v_0 \ldots v_nTw_0 \ldots w_k$ for the path with vertex set ${V(P_v) \cup V(v_nTw_0) \cup V(P_w)}$ and edge set ${E(P_v) \cup E(v_nTw_0) \cup E(P_w)}$.

For a vertex set $X \subseteq V$ and an integer $k \geq 1$, we denote by $N_k(X)$ the set of vertices in $G$ that have distance at least $1$ and at most $k$ to $X$ in $G$.
For $k=1$ we just write $N(X)$ instead of $N_1(X)$, which denotes the usual neighbourhood of $X$ in $G$.
For a singleton set $\lbrace v \rbrace \subseteq V$, we omit the set brackets and write just $N_k(v)$ and $N(v)$ instead of $N_k(\lbrace v \rbrace)$ and $N(\lbrace v \rbrace)$, respectively.
Given a subgraph $H$ of $G$, we just write $N_k(H)$ and $N(H)$ instead of $N_k(V(H))$ and $N(V(H))$, respectively.

We denote the graph $K_{1, 3}$ also as \textit{claw}.
The graph $G$ is called \textit{claw-free} if it does not contain the claw as an induced subgraph.

A one-way infinite path in $G$ is called a \textit{ray} of $G$ and a two-way infinite path in $G$ is called a \textit{double ray} of $G$.
An equivalence relation can be defined on the set of all rays of $G$ by saying that two rays in $G$ are equivalent if they cannot be separated by finitely many vertices.
It is easy to check that this relation really defines an equivalence relation.
The corresponding equivalence classes of this relation are called the \textit{ends} of $G$.

For the rest of this section, we assume $G$ to be locally finite and connected.
A topology can be defined on $G$ together with its ends to obtain the topological space $|G|$.
For a precise definition of $|G|$, see \cite[Ch.\ 8.5]{diestel_buch}.
An important fact about $|G|$ is that every ray of $G$ converges to the end of $G$ it is contained in.

Apart from the definition of $|G|$ as in \cite[Ch.\ 8.5]{diestel_buch}, there is an equivalent way of defining the topological space $|G|$:
Endow $G$ with the topology of a $1$-complex (also called CW complex of dimension $1$) and then build the Freudenthal compactification of $G$. This connection was examined in \cite{freud-equi}.

For a point set $X$ in $|G|$, we denote its closure in $|G|$ by $\overline{X}$.

We define a \textit{circle} in $|G|$ as the image of a homeomorphism which maps from the unit circle $S^1$ in $\mathbb{R}^2$ to $|G|$.
The graph $G$ is called \textit{Hamiltonian} if there is a circle in $|G|$ containing all vertices of $G$.
Such a circle is called a \textit{Hamilton circle} of $G$.
For finite $G$, this coincides with the usual meaning, namely the existence of a cycle in $G$ which contains all vertices of $G$.
Such cycles are called \textit{Hamilton cycles} of $G$.

\section{Toolkit}

In this section we collect some lemmas which we shall need later for the proof of the main result.
The proof of each statement of this section can be found in \cite[Section 3]{heuer_Inf_ObSu}.
We begin with two basic facts about minimal vertex separators in claw-free graphs.

\begin{proposition}\label{2 comp}
Let $G$ be a connected claw-free graph and $S$ be a minimal vertex separator in $G$. Then $G-S$ has exactly two components.
\end{proposition}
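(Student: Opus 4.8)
The plan is to argue by contradiction, so suppose that $G - S$ has at least three components $C_1, C_2, C_3, \dots$ (there must be at least two, since $S$ is a separator). The key point is that minimality of $S$ forces every vertex $s \in S$ to have a neighbour in at least two of these components: otherwise $S \setminus \{s\}$ would still separate the graph, contradicting minimality of $S$. Fix such a vertex $s$ and pick neighbours $x_i \in C_i$ of $s$ for (at least) two indices $i$; I would like to find three such neighbours in three distinct components, because then $\{s, x_1, x_2, x_3\}$ induces a claw: the $x_i$ are pairwise non-adjacent since they lie in different components of $G - S$, and they are all adjacent to $s$. That contradicts claw-freeness and finishes the argument.

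The first thing I would do carefully is establish the ``neighbour in at least two components'' statement and then upgrade it. Since each $s \in S$ has neighbours in at least two components, and there are at least three components in total, I need to locate a single vertex of $S$ that actually reaches three of them. This is the step I expect to be the main obstacle: a priori one vertex of $S$ could talk only to $C_1, C_2$, another only to $C_2, C_3$, and so on, with no single vertex seeing three components at once. So the heart of the proof is a connectivity/counting argument ruling this out. The idea is to look at $S' = N(C_1) \cap S$, the set of vertices of $S$ with a neighbour in $C_1$; if no vertex of $S$ has neighbours in three components, then $S'$ together with $C_1$ is separated from $C_2 \cup C_3 \cup \dots$ by $S \setminus S'$ — more precisely, one shows that $S \setminus S'$ already separates $C_1$ from the rest, again contradicting the minimality of $S$, unless $S' = S$; and running the symmetric argument for $C_2$ and $C_3$ pins down a common vertex. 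One has to set this up so that the sets cannot all be forced to equal $S$ simultaneously while still keeping every vertex of $S$ restricted to two components, and that incompatibility is exactly what yields the desired vertex seeing three components.

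Once such a vertex $s$ with neighbours $x_1 \in C_1$, $x_2 \in C_2$, $x_3 \in C_3$ is found, the conclusion is immediate: $x_i x_j \notin E(G)$ for $i \neq j$ because $C_i$ and $C_j$ are distinct components of $G - S$ and neither $x_i$ nor $x_j$ lies in $S$, so $G[\{s, x_1, x_2, x_3\}]$ is isomorphic to $K_{1,3}$, contradicting the hypothesis that $G$ is claw-free. Hence $G - S$ has at most two components, and combined with the observation that a separator produces at least two, we get exactly two, as claimed. I would present the minimality argument and the ``three components'' extraction as the two substantive steps, with the claw extraction as a short concluding paragraph.
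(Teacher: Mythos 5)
Your overall strategy---extracting a claw $G[\{s,x_1,x_2,x_3\}]$ from a vertex $s\in S$ with neighbours $x_i$ in three distinct components of $G-S$---is exactly the right one, and it is the argument behind the proof the paper cites from \cite{heuer_Inf_ObSu}. The final claw extraction and the observation that a separator yields at least two components are fine. The problem lies in the step you yourself flag as the ``main obstacle'': it is not an obstacle at all, and the argument you sketch for it does not work as stated. The correct form of the minimality fact is stronger than the one you start from: for an inclusion-minimal separator $S$, \emph{every} vertex $s\in S$ has a neighbour in \emph{every} component $K$ of $G-S$. Indeed, if $s$ had no neighbour in $K$, then $N(K)\subseteq S\setminus\{s\}$, so $K$ would still be a component of $G-(S\setminus\{s\})$, while $s$ is a vertex of that graph lying outside $K$; hence $S\setminus\{s\}$ would already be a separator, contradicting minimality. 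With this, any single $s\in S$ sees all three components and the claw appears at once; no counting argument and no ``pinning down of a common vertex'' is needed.

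By contrast, your intermediate claim that $S\setminus S'$ (with $S'=N(C_1)\cap S$) separates $C_1$ from the remaining components whenever $S'\neq S$ is not a valid deduction: after deleting $S\setminus S'$, the surviving vertices of $S'$ may well have neighbours in $C_2$ as well as in $C_1$ and so reconnect $C_1$ to the rest. What minimality actually gives is that $S\setminus\{s\}$, for a \emph{single} $s$ with no neighbour in $C_1$, separates $C_1$ from $\{s\}$---which is the one-line argument above and immediately forces $N(C_1)\cap S=S$ for every component. So the gap is real but small: replace the $S\setminus S'$ deletion by the single-vertex deletion, and state the neighbourhood fact for all components rather than for ``at least two''; everything else in your write-up then goes through.
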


The following lemma together with Proposition~\ref{2 comp} are essential for the constructive proof of Lemma~\ref{Asra-cut-1}.

\begin{lemma}\label{complete}
Let $G$ be a connected claw-free graph and $S$ be a minimal vertex separator in $G$.
For every vertex $s \in S$ and every component $K$ of $G-S$, the graph $G[N(s)\cap V(K)]$ is complete.
\end{lemma}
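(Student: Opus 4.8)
The plan is to argue by contradiction. Suppose $G[N(s)\cap V(K)]$ is not complete; then there are distinct vertices $x,y\in N(s)\cap V(K)$ with $xy\notin E(G)$. I will produce a third neighbour $z$ of $s$ that is non-adjacent to both $x$ and $y$, so that $G[\{s,x,y,z\}]$ is an induced $K_{1,3}$ with centre $s$, contradicting the hypothesis that $G$ is claw-free.

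To locate $z$, I would invoke Proposition~\ref{2 comp}: since $G$ is connected and claw-free and $S$ is a minimal vertex separator, $G-S$ has exactly two components, say $K$ and $K'$, and there is no edge of $G$ between $V(K)$ and $V(K')$. I claim $s$ has a neighbour $z$ in $K'$. Otherwise $N(s)\cap V(K')=\emptyset$, and then in $G-(S\setminus\{s\})$ no edge leaves $V(K')$ (the vertex $s$ has no neighbour in $K'$, and $K$ and $K'$ are already non-adjacent), so $V(K')$ is separated from the nonempty set $V(K)$; hence $S\setminus\{s\}$ would be a separator of $G$, contradicting the minimality of $S$. Thus such a $z$ exists.

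Finally I would check that $\{s,x,y,z\}$ induces a claw with centre $s$: the four vertices are pairwise distinct since $s\in S$, $x,y\in V(K)$, $z\in V(K')$ and $x\neq y$; each of $x,y,z$ is adjacent to $s$; and $xy\notin E(G)$ by the choice of $x,y$, while $zx,zy\notin E(G)$ because $z\in V(K')$ but $x,y\in V(K)$ and $G$ has no $V(K)$--$V(K')$ edge. This contradicts claw-freeness, proving the lemma. The only step that needs genuine care is the existence of the neighbour $z$ of $s$ in the \emph{other} component; this is exactly where Proposition~\ref{2 comp} (to know there is a well-defined second component) and the minimality of $S$ both enter, and it is the crux — the remainder is a routine verification.
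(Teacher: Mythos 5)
Your proof is correct and is essentially the argument the paper relies on (it defers to \cite[Section 3]{heuer_Inf_ObSu}, where the lemma is proved in exactly this way): two non-adjacent neighbours of $s$ in $K$ together with a neighbour of $s$ in the other component of $G-S$ yield an induced claw centred at $s$. Your justification of the crux --- that minimality of $S$ forces $s$ to have a neighbour in the second component, since otherwise $S\setminus\{s\}$ would still separate $G$ --- is exactly right.
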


We proceed with a structural lemma on infinite, locally finite, connected, claw-free graphs (see Figure~1).

\begin{lemma}\label{struct_toll}\cite[Lemma 3.10]{heuer_Inf_ObSu}
Let $G$ be an infinite, locally finite, connected, claw-free graph and ${X}$ be a finite vertex set of $G$ such that $G[X]$ is connected.
Furthermore, let $\mathscr{S} \subseteq V(G)$ be a finite minimal vertex set such that $\mathscr{S} \cap X = \emptyset$ and every ray starting in $X$ has to meet $\mathscr{S}$.
Then the following holds:
\begin{enumerate}[\normalfont(i)]

\item $G-\mathscr{S}$ has $k \geq 1$ infinite components $K_1, \ldots, K_k$ and the set $\mathscr{S}$ is the disjoint union of minimal vertex separators $S_1, \ldots, S_k$ in $G$ such that for every $i$ with $1 \leq i \leq k$ each vertex in $S_i$ has a neighbour in $K_j$ if and only if $j=i$.
\item $G-\mathscr{S}$ has precisely one finite component $K_0$. This component contains all vertices of $X$ and every vertex of $\mathscr{S}$ has a neighbour in $K_0$.
\end{enumerate}
\end{lemma}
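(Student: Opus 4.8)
The plan is to read off the structure of $G-\mathscr{S}$ directly, using the minimality of $\mathscr{S}$ to force certain adjacencies to exist and claw-freeness to forbid others. First I would record three easy facts. Since $G[X]$ is connected and disjoint from $\mathscr{S}$, all of $X$ lies in one component $K_0$ of $G-\mathscr{S}$. Since $\mathscr{S}$ is finite and $G$ is locally finite, $N(\mathscr{S})$ is finite, and as $G$ is connected every component of $G-\mathscr{S}$ meets $N(\mathscr{S})$; hence $G-\mathscr{S}$ has only finitely many components. Finally, being infinite, connected and locally finite, $G$ contains a ray (by K\"onig's infinity lemma), and adding a path from $X$ to it yields a ray starting in $X$; in particular $\mathscr{S}\neq\emptyset$, and the same argument carried out inside $K_0$ shows that $K_0$ must be finite, for an infinite $K_0$ would contain a ray that, extended inside $K_0$ to start in $X$, would avoid $\mathscr{S}$, contrary to the defining property of $\mathscr{S}$.

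Next I would exploit minimality. For each $s\in\mathscr{S}$ the set $\mathscr{S}\setminus\{s\}$ does not separate $X$ from the ends, so there is a ray $R_s$ starting in $X$ that meets $\mathscr{S}$ only in $s$. Its initial segment up to $s$ lies in $G-\mathscr{S}$ and contains a vertex of $X$, hence lies in $K_0$, so $s$ has a neighbour in $K_0$; its tail after $s$ is an infinite ray avoiding $\mathscr{S}$, hence lies in a single component of $G-\mathscr{S}$, which is therefore infinite, and $s$ has a neighbour there too. Now claw-freeness enters: if some $s\in\mathscr{S}$ had neighbours in three distinct components of $G-\mathscr{S}$, then one neighbour from each component, together with $s$, would induce a claw, since there are no edges between distinct components of $G-\mathscr{S}$. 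Hence each $s\in\mathscr{S}$ has neighbours in exactly two components of $G-\mathscr{S}$: the finite component $K_0$, and precisely one infinite component.

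From this, part (ii) follows at once. If $G-\mathscr{S}$ had a finite component $K'\neq K_0$, then no vertex of $\mathscr{S}$ could be adjacent to $K'$ --- being finite, $K'$ is distinct from the unique infinite component seen by any $s$, so adjacency to $K'$ would give $s$ a third component --- whence $K'$ would be a component of $G$, contradicting connectedness. So $K_0$ is the unique finite component, it contains $X$, and every $s\in\mathscr{S}$ has a neighbour in it; thus the components of $G-\mathscr{S}$ are $K_0$ together with $k\geq 1$ infinite components $K_1,\dots,K_k$. For part (i) I would set $S_i:=\{s\in\mathscr{S}:s\text{ has a neighbour in }K_i\}$ for $1\le i\le k$: by the previous paragraph these sets partition $\mathscr{S}$, each $S_i$ is non-empty (as $G$ is connected, $K_i$ has a neighbour in $\mathscr{S}$), and a vertex of $S_i$ has a neighbour in $K_j$ (for $1\le j\le k$) iff $j=i$. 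Each $S_i$ is a separator of $G$, since every edge of $G$ leaving $K_i$ ends in $S_i$, so $G-S_i$ separates $K_i$ from $K_0$. For minimality I would fix $s\in S_i$ and check that $G-(S_i\setminus\{s\})$ is connected: $K_0$ is connected, every vertex of $S_j$ with $j\neq i$ is adjacent to $K_0$, every $K_j$ with $j\neq i$ is adjacent to $S_j$, $s$ is adjacent to $K_0$, and $K_i$ is adjacent to $s$, so every remaining vertex reaches $K_0$; hence $S_i\setminus\{s\}$ is not a separator, and as $s$ was arbitrary $S_i$ is minimal.

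I do not expect a genuine obstacle here: the heart of the argument is the one-line claw-freeness observation bounding the number of components of $G-\mathscr{S}$ that a single vertex of $\mathscr{S}$ can meet, and the only places needing care are extracting the rays $R_s$ from the minimality of $\mathscr{S}$ and the bookkeeping in the minimality proof for the separators $S_i$.
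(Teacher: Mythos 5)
Your proof is correct and complete; the one point worth making explicit is that passing from ``$S_i\setminus\{s\}$ does not separate $G$ for each single $s$'' to genuine inclusion-minimality of $S_i$ uses the fact that every vertex of $S_i$ has a neighbour in $K_0$, which you have already established. The paper itself does not reproduce a proof of this lemma but defers it to the cited reference, and your argument --- extracting via the minimality of $\mathscr{S}$ a ray through each $s\in\mathscr{S}$ meeting $\mathscr{S}$ only in $s$, and using claw-freeness to bound by two the number of components of $G-\mathscr{S}$ in which $s$ has neighbours --- is exactly the natural route to this structure.
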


\begin{figure}[htbp]
\centering
\includegraphics{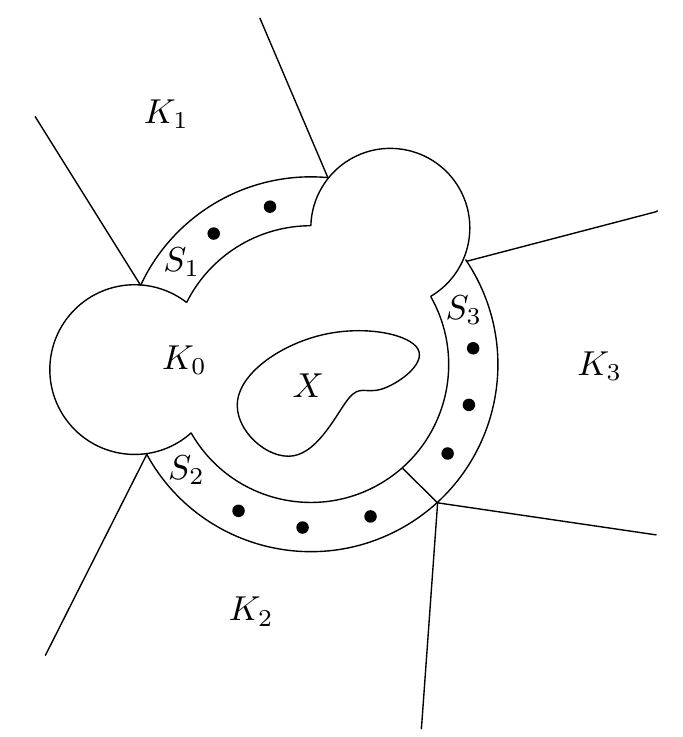}
\caption{The structure described in Lemma~\ref{struct_toll} for $k=3$}
\label{lemma_3_3_k_3}
\end{figure}

To prove that an infinite, locally finite, connected graph $G$ is Hamiltonian, we use the following lemma.
A sequence of cycles of $G$ and a set of vertex sets which fulfill five conditions are needed to be able to apply this lemma.
While a Hamilton circle is built from the sequence of cycles as a limit object, the vertex sets help to control
this process by witnessing that the limit does really become a circle.
Since this lemma is our key to prove Hamiltonicity, let us consider the idea of the lemma more carefully before we state it.
We define a limit object from a sequence of cycles of $G$ by saying that a vertex or an edge of $G$ is contained in the limit if it lies in all but finitely many cycles of the sequence.
Of course we must be able to tell for each vertex and for each edge of $G$ whether it is in the limit or not.
The conditions $(\text{i})$ and $(\text{iv})$ of the lemma ensure that we can do this.
Furthermore, condition $(\text{i})$ forces every vertex of $G$ to be in the limit, which is necessary for the limit to be a Hamilton circle.
Ensuring that the limit object becomes a circle consists of two parts.
One thing is to guarantee that the limit is topologically connected and that the degree of each vertex is two in the limit.
Condition $(\text{iv})$ and the definition of the limit take care of this such that both of these properties can easily be verified.
The problematic part is to ensure that all ends have degree $2$ in the limit object and not higher.
Both parts together are equivalent to the limit object being a circle by a result of Bruhn and Stein~\cite[Prop.~3]{circle}.
In fact, without further conditions, there might be ends with degree higher than $2$ in the limit.
To prevent this problem, we use the conditions $(\text{ii})$, $(\text{iii})$ and $(\text{v})$.
They guarantee the existence of a sequence of finite cuts for each end such that each sequence converges to its corresponding end and the limit object meets each of the cuts precisely twice.
This prevents ends having a degree higher that $2$ in the limit object.
Due to the five conditions of the lemma, it is not easy to find cycles and vertex sets which can be used for the application of the lemma.
Indeed, the main work for the proof of Theorem~\ref{Asra-loc-fin} is to construct cycles and vertex sets which fulfill the required conditions.
Especially the structure of a graph as described in Lemma~\ref{struct_toll} will help us in the construction.

\begin{lemma}\label{HC-extract}\cite[Lemma 3.11]{heuer_Inf_ObSu}
Let $G$ be an infinite, locally finite, connected graph and $(C_i)_{i \in \mathbb{N}}$ be a sequence of cycles of $G$.
Then $G$ is Hamiltonian if there exists an integer $k_i \geq 1$ for every $i \geq 1$ and vertex sets $M^i_j \subseteq V(G)$ for every $i \geq 1$ and $j$ with $1 \leq j \leq k_i$ such that the following is true:
\textnormal{
\begin{enumerate}[\normalfont(i)]
\item \textit{For every vertex $v$ of $G$, there exists an integer $j \geq 0$ such that $v \in V(C_i)$ holds for every $i \geq j$.}
\item \textit{For every $i \geq 1$ and $j$ with $1 \leq j \leq k_i$, the cut $\delta(M^i_j)$ is finite.}
\item \textit{For every end $\omega$ of $G$, there is a function $f : \mathbb{N} \setminus \lbrace 0 \rbrace \longrightarrow \mathbb{N}$ such that the inclusion ${M^{j}_{f(j)} \subseteq M^i_{f(i)}}$ holds for all integers $i, j$ with $1 \leq i \leq j$ and the equation ${M_{\omega}:= \bigcap^{\infty}_{i=1} \overline{M^i_{f(i)}} = \lbrace \omega \rbrace}$ is true.}
\item \textit{$E(C_i) \cap E(C_j) \subseteq E(C_{j+1})$ holds for all integers $i$ and $j$ with $0 \leq i < j$.}
\item \textit{The equations $E(C_i) \cap \delta(M^p_j) = E(C_p) \cap \delta(M^p_j)$ and $|E(C_i) \cap \delta(M^p_j)| = 2$ hold for each triple $(i, p, j)$ which satisfies $1 \leq p \leq i$ and $1 \leq j \leq k_p$.}
\end{enumerate}
}
\end{lemma}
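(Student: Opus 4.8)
The plan is to build a candidate Hamilton circle as the \emph{limit} of the sequence $(C_i)$ and then verify it is a circle via the characterisation of Bruhn and Stein~\cite[Prop.~3]{circle}. Define the edge set
\[ E_H := \{ e \in E(G) : e \in E(C_i) \text{ for all but finitely many } i \}, \]
let $H := (V(G), E_H)$, and let $\hat H \subseteq |G|$ be the closure in $|G|$ of the point set $\bigcup_{e \in E_H} e$. By construction $\hat H$ is a standard subspace, so by~\cite[Prop.~3]{circle} it remains to show three things: that $\hat H$ is topologically connected, that every vertex lying in $\hat H$ has degree $2$ in $\hat H$, and that every end lying in $\hat H$ has degree $2$ in $\hat H$. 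I would organise the proof around these three tasks; the main subtlety, as anticipated in the discussion preceding the lemma, is the control of the end degrees, which is exactly what conditions~(ii), (iii) and~(v) are tailored to provide.

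First I would extract from condition~(iv) the following \emph{persistence property}: if an edge $e$ lies in $E(C_i) \cap E(C_j)$ for some $i < j$, then a straightforward induction using~(iv) shows $e \in E(C_m)$ for all $m \geq j$. Hence every edge of $G$ lies in zero cycles, in exactly one cycle, or in all but finitely many cycles, and consequently $e \in E_H$ if and only if $e$ appears in at least two of the cycles. With this in hand the vertex degrees follow from~(i) and local finiteness. No vertex can have three incident edges in $E_H$, since these would all appear simultaneously in some $C_m$, contradicting that cycles are $2$-regular; this gives degree at most $2$. For the reverse bound, note that an edge at $v$ that is not in $E_H$ lies in at most one cycle, so by local finiteness there is an index beyond which no such edge occurs at $v$; past this index and past the index from~(i) after which $v \in V(C_i)$, both edges of $C_i$ at $v$ must lie in $E_H$. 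Thus every vertex has degree exactly $2$, and in particular every vertex of $G$ lies in $\hat H$.

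The heart of the argument is the end degrees, and the key computation is that $\hat H$ meets every cut of the form $\delta(M^p_j)$ in exactly two edges. Indeed, fixing such a cut and applying the two equations of condition~(v) for all $i \geq p$, every cycle $C_i$ with $i \geq p$ meets $\delta(M^p_j)$ in the \emph{same} pair of edges $E(C_p) \cap \delta(M^p_j)$, a set of size $2$ by~(v). These two edges therefore lie in $E_H$, and conversely any edge of $E_H$ in $\delta(M^p_j)$ must occur in some $C_i$ with $i \geq p$ and hence be one of them; so $|E_H \cap \delta(M^p_j)| = 2$, each such cut being finite by~(ii). Now fix an end $\omega$ and take the function $f$ supplied by~(iii). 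The sets $M^i_{f(i)}$ are nested and decreasing in $i$, their closures intersect in exactly $\{\omega\}$, and each cut $\delta(M^i_{f(i)})$ is met by $\hat H$ in precisely two edges. By the standard description of end degree through a shrinking sequence of finite cuts, this forces the degree of $\omega$ in $\hat H$ to be exactly $2$: the value $2$ is simultaneously an upper bound, since the cuts separate $\omega$ from the rest and are crossed only twice, and a lower bound, since a degree of $1$ would force some small cut around $\omega$ to be crossed only once. As~(iii) applies to every end, every end has degree $2$ in $\hat H$.

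Finally, connectedness, which I would argue by contradiction. A clopen partition $\hat H = A \sqcup B$ into nonempty sets yields disjoint compact subsets of $|G|$, and since $\hat H$ is standard no edge of $E_H$ can straddle the partition, so both $A$ and $B$ contain vertices of $G$. Using the separation properties of $|G|$ I would then find a finite cut $\delta(M)$ of $G$ separating a vertex of $A$ from a vertex of $B$ while avoiding $E_H$ entirely. But for all large $i$ the connected cycle $C_i$ contains vertices on both sides of $\delta(M)$ and hence crosses it; as $\delta(M)$ is finite, some edge of it would lie in infinitely many $C_i$ and thus in $E_H$, contradicting $E_H \cap \delta(M) = \emptyset$. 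This shows $\hat H$ is connected and completes the verification that it is a Hamilton circle. I expect the two genuinely delicate points to be the passage from the cut counts to the actual end degrees, which relies on the topological notion of end degree and its characterisation through shrinking cuts, and the clean extraction of the separating finite cut in the connectedness step; everything else reduces to the bookkeeping encoded in conditions~(i) and~(iv).
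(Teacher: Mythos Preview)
The paper does not actually prove this lemma in full---it is quoted from \cite[Lemma~3.11]{heuer_Inf_ObSu}, with only the informal discussion preceding the statement serving as a sketch---and your proposal follows that sketch exactly: define the limit edge set, invoke Bruhn--Stein \cite[Prop.~3]{circle}, use (i) and~(iv) for vertex degrees, and (ii), (iii), (v) for end degrees via the shrinking cuts. Your argument is correct and matches the intended approach; the one place you are right to flag as delicate is the extraction of a finite separating cut in the connectedness step, which in the cited source is handled by observing that the two disjoint compact pieces of $\hat H$ can be separated by a basic open set of $|G|$, whose boundary is a finite cut.
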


\section{A local degree condition}

This section deals with locally finite graphs satisfying the following degree condition:

\[ d(u) + d(w) \geq |N(u) \cup N(v) \cup N(w)|\textit{ for every induced path } uvw. \tag{$\ast$} \]
\\
Asratian and Khachatrian gave an example of a class of finite graphs which satisfy this condition and have arbitrarily high diameter (see \cite{asra-good}).
Seizing the idea of their examples we show in the first part of this section that it is easy to construct locally finite infinite graphs satisfying $(\ast)$, even claw-free ones.
In order to state their examples, we use the notion of lexicographic products of graphs.

Let $G$ and $H$ be two graphs.
Then the \textit{lexicographic product} $G \circ H$ of $G$ and $H$ is the graph on the vertex set ${V(G \circ H) = V(G) \times V(H)}$ where two vertices $(u_1, h_1)$ and $(u_2, h_2)$ are adjacent if and only if either $u_1u_2 \in E(G)$ or $u_1 = u_2$ and $h_1h_2 \in E(H)$.

Now we can state the examples of Asratian and Khachatrian.
Let $G_{q,n} = C_q \circ K_n$ where $C_q$ is the cycle of length $q \geq 3$ and $K_n$ is the complete graph on $n \geq 2$ vertices (see Figure~2).
The definition of $G_{q,n}$ ensures that the diameter of $G_{q,n}$ is $\lfloor {q/2} \rfloor$.
To see that $G_{q,n}$ satisfies $(\ast)$ for every $q \geq 3$ and every $n \geq 2$, note that the equations
$${d(u) + d(w) = 6n-2} \;  \textnormal{ and } \;  |N(u) \cup N(v) \cup N(w)| = 5n$$
hold for each induced path $uvw$ of $G_{q,n}$.
Since we assume $n \geq 2$, the graph $G_{q,n}$ satisfies $(\ast)$.

We can obtain infinite locally finite graphs satisfying $(\ast)$ in the same way.
Let $G_{\mathbb{Z}, n} = D \circ K_n$ where $D$ is a double ray and $K_n$ is the complete graph on $n \geq 2$ vertices (see Figure~2).
By the same equations as above, the graph $G_{\mathbb{Z}, n}$ \linebreak satisfies $(\ast)$ for every $n \geq 2$.

\begin{figure}[htbp]
\centering
\includegraphics[width=7.5cm]{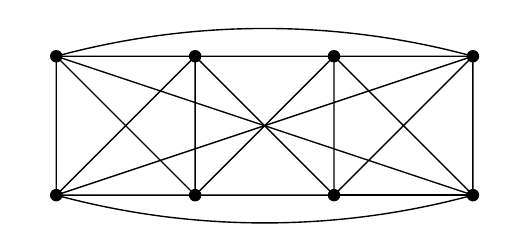}
\includegraphics[width=10cm]{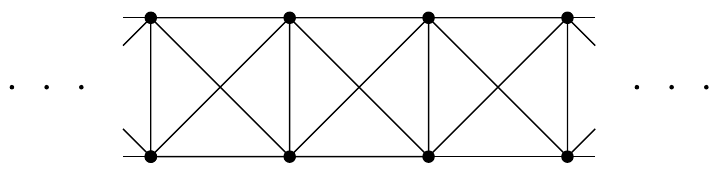}
\caption{The graph $G_{4,2}$ and the graph $G_{\mathbb{Z}, 2}$ below.}
\label{asra-beispiel}
\end{figure}

Note that the graphs $G_{q,n}$ and $G_{\mathbb{Z}, n}$ are claw-free.
In general, being claw-free does not follow from $(\ast)$.
To see this, we give examples of such graphs, whose structure is very similar to $G_{q,n}$ and $G_{\mathbb{Z}, n}$.
We begin with the definition of the graph $H_{2q, n}$.
Let ${V(H_{2q, n}) = A \times V(K_{1,3}) \cup B \times V(K_{n})}$ where ${A, B \subseteq V(C_{2q})}$ are the partition classes of a bipartition of the cycle $C_{2q}$ of length $2q$.
Two vertices $(a, v)$ and $(b, w)$ of $H_{2q, n}$ are adjacent if ${ab \in E(C_{2q})}$ or if ${a = b \in A}$ and ${vw \in E(K_{1,3})}$ or if ${a = b \in B}$ and ${vw \in E(K_n)}$.
The graph $H_{\mathbb{Z}, n}$ is defined analogously where the cycle $C_{2q}$ is replaced by a double ray $D$.
The argumentation for checking that $H_{2q, n}$ and $H_{\mathbb{Z}, n}$ satisfy $(\ast)$ for ${q \geq 2}$ and ${n \geq 6}$ are the same.
So we look only at $H_{2q, n}$.
Let $(a, u)(b, v)(c, w)$ be an induced path in $H_{2q, n}$ or $H_{\mathbb{Z}, n}$.
We distinguish four cases. If ${a = b = c}$ holds, then $uvw$ is an induced path in $K_{1, 3}$ and so we get
$${d((a, u)) + d((c, w)) = 4n + 2 \geq 2n + 4 = |N((a, u)) \cup N((b, v)) \cup N((c, w))|}.$$
If $a = c \neq b$, then $u$ and $w$ are nonadjacent vertices of $K_{1,3}$ and $v$ lies in $K_n$.
This gives
$$d((a, u)) + d((c, w)) = 4n + 2 \geq 2n + 8 = |N((a, u)) \cup N((b, v)) \cup N((c, w))|$$
since $n \geq 3$. If $a$, $b$ and $c$ are pairwise distinct and $a, c \in B$, then we obtain
$$d((a, u)) + d((c, w)) = 2n + 14 \geq 2n + 12 \geq |N((a, u)) \cup N((b, v)) \cup N((c, w))|.$$
Note that for $q \geq 3$ the second inequality becomes an equality.
In the last case, the vertices $a$, $b$ and $c$ are pairwise distinct and $a, c \in A$.
We get the inequality chain
$$d((a, u)) + d((c, w)) \geq 4n + 2 \geq 3n + 8 = |N((a, u)) \cup N((b, v)) \cup N((c, w))|$$
since $n \geq 6$.
Note here that the first inequality becomes an equality if $u$ and $v$ are both vertices of degree $1$ in $K_{1, 3}$.
These examples show that there are finite and infinite locally finite graphs which satisfy $(\ast)$, have arbitrary high diameter and contain induced claws.
The graphs $H_{\mathbb{Z}, n}$ show that even infinitely many induced claws can be contained in graphs satisfying $(\ast)$.

Next we state a basic lemma about graphs having the property $(\ast)$.

\begin{lemma}\label{ungl-kette}\cite{asra-good}
Let $G$ be a graph which satisfies $(\ast)$.
Then the following is true for every induced path $uvw$ of $G$:
\[ |N(u) \cap N(w)| \geq |N(v) \setminus (N(u) \cup N(w))| \geq 2. \]
\end{lemma}

\begin{proof}
Let $uvw$ be an induced path of $G$. We get the following inequality chain:
\begin{align*}
|N(u) \cap N(w)| &= d(u) + d(w) - |N(u) \cup N(w)| \\
				 &\geq |N(u) \cup N(v) \cup N(w)| - |N(u) \cup N(w)| \\
				 &= |N(v) \setminus (N(u) \cup N(w))| \\
				 &\geq |\lbrace u, w \rbrace| = 2
\end{align*}
All equalities in this chain are obvious.
The first inequality is valid due to $(\ast)$.
The second inequality holds because $uvw$ is an induced path, which means that $u$ and $w$ are not adjacent.
\end{proof}

The proof of Theorem~\ref{Asra-fin} which Asratian and Khachatrian presented in \cite{asra-good} is basically the same as the one for the following lemma. For the sake of completeness we state the proof here.

\begin{lemma}\label{Asra-enlarge}
Let $G$ be a locally finite graph satisfying $(\ast)$, $C$ be a cycle of $G$ and $v$ be a vertex in $N(C)$.
Then there exists a vertex $u \in N(v) \cap V(C)$ such that one of the following is true:
\textnormal{
\begin{enumerate}[\normalfont(i)]
\item \textit{The vertex $v$ is adjacent with $u^+$.}
\item \textit{There is a vertex $x \in N(v) - V(C)$ such that $x$ is adjacent with $u^+$.}
\item \textit{There exists a vertex $y \neq u$ in $N(v) \cap V(C)$ such that $u^+ \neq y$ and $u^+$ and $y^+$ are adjacent.}
\end{enumerate}
}
\end{lemma}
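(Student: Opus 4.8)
The plan is to argue by contradiction: I suppose that for \emph{every} $u \in N(v) \cap V(C)$ none of (i), (ii), (iii) holds, and derive a contradiction with $(\ast)$. This is essentially the argument Asratian and Khachatrian used for Theorem~\ref{Asra-fin}, with the three alternatives playing the role that maximality of a longest cycle plays there.

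Write $A := N(v) \cap V(C)$, $B := N(v) \setminus V(C)$ and $A^+ := \{\, a^+ : a \in A \,\}$; note that $v \notin V(C)$ and, as $v \in N(C)$, that $A \neq \emptyset$. Failure of (i) for all $u$ means that no edge of $C$ joins two vertices of $A$, equivalently $A \cap A^+ = \emptyset$ (so $a^-, a^+ \notin A$ for every $a \in A$); failure of (ii) for all $u$ means that no vertex of $B$ has a neighbour in $A^+$; failure of (iii) for all $u$ means that $A^+$ is an independent set of $G$. If $A = \{u\}$, then $vuu^+$ is an induced path (since $u^+ \notin A$), so Lemma~\ref{ungl-kette} supplies $x \in N(v) \cap N(u^+)$ with $x \neq u$; as $N(v) = \{u\} \cup B$ this forces $x \in B$, so (ii) holds for $u$ --- a contradiction. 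Hence $|A| \geq 2$.

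Now fix any $u \in A$. Since $u^+ \notin A$, the path $vuu^+$ is induced, so Lemma~\ref{ungl-kette} gives $|N(v) \cap N(u^+)| \geq 2$; because $N(v) = A \cup B$ and no vertex of $B$ is adjacent to $u^+ \in A^+$, this means that $u^+$ has at least two neighbours in $A$, so there is $x \in A$ with $x \neq u$ and $x \sim u^+$. The vertices $u^+$ and $x^+$ are distinct members of the independent set $A^+$, hence non-adjacent, so $u^+ x x^+$ is again an induced path; applying Lemma~\ref{ungl-kette} to it yields a common neighbour $w \neq x$ of $u^+$ and $x^+$, and then $w \notin B$ (since $w \sim u^+ \in A^+$) while $w \notin A^+$ (since $A^+$ is independent).

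The hard part is to push such deductions to a contradiction. The governing picture is the standard rotation: each of (i), (ii), (iii) would let one reroute $C$ so as to absorb $v$ (and in case (ii) also $x$), so their simultaneous failure ought to be incompatible with the mere existence of $C$ under $(\ast)$. To make this rigorous I would keep track of how the (numerous) edges between $A$ and $A^+$ lie on the finite cycle $C$: feed further suitable triples into Lemma~\ref{ungl-kette}, each time using that $A^+$ is independent and that $B$ avoids $A^+$ --- which is precisely what keeps those triples induced paths --- and distinguish cases according to the cyclic positions of the vertices involved and of any chords, until $A \cap A^+ = \emptyset$, the independence of $A^+$, or $(\ast)$ itself is contradicted. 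This case analysis is where essentially all the work lies; everything before it is routine.
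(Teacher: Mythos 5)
There is a genuine gap, and you acknowledge it yourself: everything after ``The hard part is to push such deductions to a contradiction'' is a hope, not an argument. Your setup is fine --- assuming all three alternatives fail for every $u \in A$ correctly yields $A \cap A^+ = \emptyset$, the independence of $A^+$, and that no vertex of $B$ meets $A^+$, and your treatment of $|A|=1$ and the first applications of Lemma~\ref{ungl-kette} are correct. But the entire content of the lemma is the step you leave open, and the direction you propose for it (a case analysis on the cyclic positions of the vertices and chords involved) is not one that obviously terminates: each application of Lemma~\ref{ungl-kette} produces yet another common neighbour about which you know only that it lies in $A$, and nothing in your sketch prevents this from cycling among the finitely many $u_i$ forever without ever contradicting anything.

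The paper's proof resolves exactly this difficulty with a quantitative, position-free argument rather than a positional case analysis. For each $i$ it sets $I_i = N(v)\cap N(u_i^+)$ and $M_i = N(u_i)\setminus(N(v)\cup N(u_i^+))$, so that Lemma~\ref{ungl-kette} applied to the induced path $vu_iu_i^+$ gives $|I_i|\ge |M_i|\ge 2$. It then runs an augmenting procedure: whenever every ``unused'' vertex of the current $I_{f(k)}$ lies on $C$ (hence equals some $u_r$), it moves to index $r$, records $u_r$ in a set $Y\subseteq I_{f(k)}$ and records $u^+_{f(k)}$ in a set $Z\subseteq M_r$ (this membership is exactly where the failure of (i) and (iii) is used), and maintains the invariant $|Z^k_i|\ge|Y^k_i|$ with strict inequality at the active index. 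Since $|I_{f(k)}|\ge|M_{f(k)}|\ge|Z^k_{f(k)}|>|Y^k_{f(k)}|$, the procedure can always continue, yet each step strictly enlarges some $Y^k_p\subseteq\{u_1,\dots,u_n\}$, so by pigeonhole it must eventually find a vertex of some $I_j$ off the cycle --- which is precisely case (ii). This bookkeeping is the missing idea; without it (or a substitute of comparable substance) your proposal does not prove the lemma.
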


\begin{proof}
Since cycles are finite, we get that the set $N(v) \cap V(C)$ is finite.
Let ${N(v) \cap V(C) = \lbrace u_1, \ldots, u_n \rbrace}$.
We know that $n \geq 1$ holds since $v$ lies in the neighbourhood of $C$.
We may assume that $vu^+_i \notin E(G)$ is true for all $i$ with $1 \leq i \leq n$ because otherwise statement~(i) of the lemma would hold and we would be done.
This assumption implies ${\lbrace u_1, \ldots, u_n \rbrace \cap \lbrace u^+_1, \ldots, u^+_n \rbrace = \emptyset}$.
So we may assume further that ${\lbrace u^+_1, \ldots, u^+_n \rbrace}$ is an independent set because otherwise statement~(iii) of the lemma would be true and the proof complete.

For every ${i \in \lbrace 1, \ldots, n \rbrace}$ let
$${I_i = N(v) \cap N(u^+_i)}  \;  \textnormal{ and } \;   {M_i = N(u_i) \setminus (N(v) \cup N(u^+_i))}.$$
By our assumptions, we know that $vu_iu^+_i$ is an induced path for every ${i \in \lbrace 1, \ldots, n \rbrace}$.
So Lemma~\ref{ungl-kette} implies that ${|I_i| \geq |M_i|}$ and ${|I_i| \geq 2}$ are true for each ${i \in \lbrace 1, \ldots, n \rbrace}$.

Now we will show that there is a vertex ${x \in I_j \setminus V(C)}$ for some ${j \in \lbrace 1, \ldots, n \rbrace}$, which implies statement~(ii) of the lemma.
For this, we construct a function $f(k)$ and sequences $(Z^k_i)$, $(Y^k_i)$ for each ${i \in \lbrace 1, \ldots, n \rbrace}$ where the relations ${1 \leq f(k) \leq n}$ and ${Z^k_i \subseteq \lbrace u^+_1, \ldots, u^+_n \rbrace \cup \lbrace v \rbrace}$ and ${Y^k_i \subseteq \lbrace u_1, \ldots, u_n \rbrace}$ are always valid.
We begin by setting ${f(1) = 1}$, ${Z^1_i = \lbrace v, u^+_i \rbrace}$ and ${Y^1_i = \lbrace u_i \rbrace}$ for each ${i \in \lbrace 1, \ldots, n \rbrace}$.
Now assume we have already defined $f(i)$ for every ${i \in \lbrace 1, \ldots, k \rbrace}$ and both sequences up to length $k$ for $k \geq 1$. If ${(I_{f(k)} \setminus Y^k_{f(k)}) \subseteq V(C)}$ and ${I_{f(k)} \setminus Y^k_{f(k)}}$ is nonempty, take a vertex $w$ of ${I_{f(k)} \setminus Y^k_{f(k)}}$.
In this case, we know that ${w = u_r}$ for some $r$ with ${1 \leq r \leq n}$ and proceed with the construction as follows for every ${i \in \lbrace 1, \ldots, n \rbrace}$:
\[
f(k+1) = r,
\]
\[
Y^{k+1}_i = 
\begin{cases}
Y^{k}_{f(k)} \cup \lbrace u_r \rbrace & \mbox{if } i = f(k) \\
Y^{k}_r 								 & \mbox{if } i = r \\
Y^{k}_i 								 & \mbox{otherwise},
\end{cases}\]
\[
Z^{k+1}_i = 
\begin{cases}
Z^{k}_{f(k)} 					  & \mbox{if } i = f(k) \\
Z^{k}_r \cup \lbrace u^+_{f(k)} \rbrace & \mbox{if } i = r \\
Z^{k}_i 							  & \mbox{otherwise}.
\end{cases}
\]

Next we gather some facts about the sequences.

\setcounter{claim}{0}
\begin{claim}
If $f(i)$ is defined for every ${i \in \lbrace 1, \ldots, k \rbrace}$ and both sequences up to length $k$ for ${k \geq 1}$, then we get:
\textnormal{
\begin{enumerate}[\normalfont(a)]
\item \textit{The relations ${Z^k_i \subseteq M_i}$, ${Y^k_i \subseteq I_i}$ and ${|Z^k_i| \geq |Y^k_i|}$ hold for each ${i \in \lbrace 1, \ldots, n \rbrace}$}.
\item \textit{${|Z^k_{f(k)}| > |Y^k_{f(k)}|}$}.
\item \textit{${I_{f(k)} \setminus Y^k_{f(k)} \neq \emptyset}$}.
\item \textit{${|Y^k_{f(k-1)}| = |Y^{k-1}_{f(k-1)}| + 1}$ if ${k \geq 2}$}.
\end{enumerate}
}
\end{claim}
We prove Claim~1 by induction on $k$.
It is easily checked that for $k=1$ all statements are true.
Now assume all statements are true for some $k \geq 1$.
We show statement~(a) for $k+1$ first.
The relations ${Z^{k+1}_i \subseteq M_i}$ and ${Y^{k+1}_i \subseteq I_i}$ follow by the induction hypothesis and construction.
For $i \neq f(k)$, the relation ${|Z^{k+1}_i| \geq |Y^{k+1}_i|}$ follows using the induction hypothesis and by the definition of both sets.
If $i = f(k)$, the inequality is true because ${|Z^k_{f(k)}| > |Y^k_{f(k)}|}$ holds by the induction hypothesis.

Statement~(b) for $k+1$ follows by definition of both sets and by the inequality ${|Z^k_{f(k+1)}| \geq |Y^k_{f(k+1)}|}$ of the induction hypothesis.

To prove statement~(c) for $k+1$, note that ${|M_{f(k+1)}| \geq |Z^{k+1}_{f(k+1)}| > |Y^{k+1}_{f(k+1)}|}$ is true by statement~(a) and (b) for $k+1$.
Now the statement follows since the inequality ${|I_{f(k+1)}| \geq |M_{f(k+1)}|}$ holds as shown before.

Statement~(d) is obviously true for $k=2$ and follows for arbitrary ${k+1 > 2}$ from the definition of ${Y^{k+1}_{f(k)}}$.
This completes the proof of Claim~1.

Now we see that there is some $j$ and a vertex ${{x \in I_{f(j)} \setminus (Y^{j}_{f(j)} \cup V(C))}}$.
Otherwise, statement~(c) of Claim~1 implies that we do not stop constructing the sequences $(Z^k_i)$ and $(Y^k_i)$.
This yields a contradiction because then it follows from statement~(d) of Claim~1 and the pigeonhole principle that there exists some ${p \in \lbrace 1, \ldots, n \rbrace}$ such that ${|Y^{k}_{p}| \rightarrow \infty}$ for ${k \rightarrow \infty}$ although ${Y^{k}_{p} \subseteq \lbrace u_1, \ldots, u_n \rbrace}$.
\end{proof}

To each case of Lemma~\ref{Asra-enlarge} corresponds a cycle that contains $v$ and all vertices of $C$.
We get these cycles by taking $C$ and replacing some of its edges.
For case~(i), we replace the edge $uu^+$ by the path $uvu^+$.
For case~(ii), we take the path $uvxu^+$ instead of the edge $uu^+$.
In case~(iii), we delete the edges $uu^+, yy^+$ and add the edges $uv, vy, u^+y^+$ to obtain the desired cycle if $u^+$ and $y^+$ are adjacent.
We call each such resulting cycle an \textit{extension of} $C$, or more precise a (i)-, (ii)- \linebreak or (iii)-\textit{extension of} $C$ depending on from which of the three cases of the lemma we obtain the resulting cycle.
A vertex which is used for an extension of a cycle as $v$ above is called the \textit{target} of the extension (see Figure~3).
For a cycle $C$, we call a finite sequence of cycles $(C_i)$, where ${0 \leq i \leq n}$ for some ${n\in \mathbb{N}}$, an \textit{extension sequence of} $C$ if $C_0 = C$ and $C_i$ is an extension of $C_{i-1}$ for every $i$ satisfying ${1 \leq i \leq n}$.

Note that Theorem~\ref{Asra-fin} follows easily from Lemma~\ref{Asra-enlarge} together with Lemma~\ref{ungl-kette}.

\begin{figure}[htbp]
\centering
\includegraphics[width=3cm]{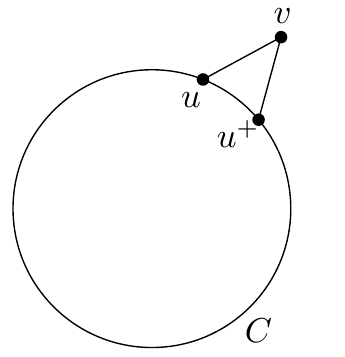}
\includegraphics[width=3cm]{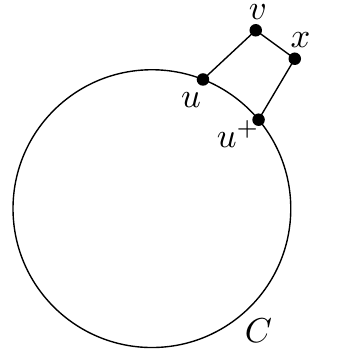}
\includegraphics[width=3cm]{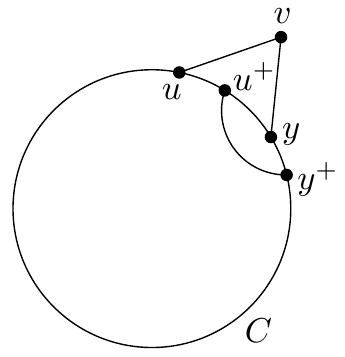}
\caption{(i)-, (ii)- and (iii)-extension of a cycle $C$ with target $v$.}
\label{extensions}
\end{figure}

Now we have nearly everything we need to prove Theorem~\ref{Asra-loc-fin}.
Before we can focus on the proof of the main theorem we state the following lemma, which is the key to make Lemma~\ref{HC-extract} applicable to the proof of the main theorem by helping us to define a suitable sequence of cycles together with vertex sets.
To prove the following lemma we make much use of extensions of cycles, which we obtain as in Lemma~\ref{Asra-enlarge}.

\begin{lemma}\label{Asra-cut-1}
Let $G=(V,E)$ be an infinite locally finite, connected, claw-free graph which satisfies $(\ast)$, $C$ be a cycle of $G$ with ${V(C) \setminus N(N(C)) \neq \emptyset}$ and ${\mathscr{S} \subseteq N(C)}$ be a minimal vertex set such that every ray starting in $C$ meets $\mathscr{S}$.
Furthermore, let $k$, $S_j$ and $K_j$ be analogously defined as in Lemma~\ref{struct_toll}.
Then there exists a cycle $C'$ with the properties:
\textnormal{
\begin{enumerate}[\normalfont(i)]
\item \textit{${V(K_0) \cup \mathscr{S} \cup N_3(\mathscr{S}) \subseteq V(C')}$.}
\item \textit{For every $j$ with ${1 \leq j \leq k}$, there are vertex sets ${M_j \subseteq V}$ such that the inclusions ${V(K_j) \setminus N(S_j) \subseteq M_j \subseteq V(K_j) \cup \mathscr{S} \cup N(\mathscr{S})}$ as well as the equation ${|E(C') \cap \delta(M_j)| = 2}$ are true.}
\item \textit{${E(C - N(N(C))) \subseteq E(C')}$ and for every edge ${e = uv \in E(C') \setminus E(C)}$ the inclusion ${\lbrace u, v \rbrace \subseteq (V \setminus V(C)) \cup N_2(N(C))}$ holds.}
\end{enumerate}
}
\end{lemma}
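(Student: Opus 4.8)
The plan is to build the desired cycle $C'$ from $C$ by a carefully controlled extension sequence in the sense defined after Lemma~\ref{Asra-enlarge}, guided by the structure supplied by Lemma~\ref{struct_toll}. First I would invoke Lemma~\ref{struct_toll} (whose hypotheses are met because $G[X]$ can be taken to be the connected subgraph $C$, and $\mathscr{S}$ is the prescribed minimal vertex set): this yields the finite component $K_0 \supseteq V(C)$, the infinite components $K_1,\dots,K_k$, and the partition of $\mathscr{S}$ into minimal separators $S_1,\dots,S_k$. Using Proposition~\ref{2 comp} and Lemma~\ref{complete} we get, for each $j$ and each $s \in S_j$, that $G[N(s) \cap V(K_j)]$ is complete, and that $G-S_j$ has exactly two components (one of them containing $K_0$). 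These completeness facts are what let us route a cycle into the "fringe" $N_3(\mathscr{S})$ of each infinite side and back out again through $S_j$ without getting trapped, and they are the reason claw-freeness is essential here.

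The core of the argument is an iterative enlargement. Starting from $C$, as long as some vertex of the target set $T := V(K_0) \cup \mathscr{S} \cup N_3(\mathscr{S})$ is not yet on the current cycle but lies in its neighbourhood, I would apply Lemma~\ref{Asra-enlarge} to absorb such a vertex, obtaining a (i)-, (ii)- or (iii)-extension. The two things to monitor are: (1) that the process terminates, i.e. that we do not keep re-entering and re-leaving the infinite components; and (2) that the new edges introduced stay within the permitted region $(V \setminus V(C)) \cup N_2(N(C))$ demanded by~(iii), and that the old edges of $C$ lying deep inside, namely $E(C - N(N(C)))$, are never touched. For~(2), note that every extension only modifies edges incident to the target $v$ or to vertices at bounded distance from $v$ (the $x$ in case (ii), the $y,u^+,y^+$ in case (iii)), and $v \in N(\text{current cycle})$; tracking distances to $N(C)$ carefully shows the modified edges have both endpoints in $(V\setminus V(C)) \cup N_2(N(C))$, while edges of $C$ avoiding $N(N(C))$ have both endpoints "too far in" to ever be an $u,u^+,y,y^+$ of an extension whose target is reachable. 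For~(1), here is where Lemma~\ref{struct_toll}(i)–(ii) does the work: since each $S_j$ separates $K_0$ from $K_j$ and each $s\in S_j$ has neighbours on both sides, I would show the cycle can be arranged so that it crosses each cut $\delta(M_j^\circ)$ — for an appropriate set $M_j^\circ$ with $V(K_j)\setminus N(S_j) \subseteq M_j^\circ \subseteq V(K_j)\cup\mathscr{S}\cup N(\mathscr{S})$ — exactly twice; the finiteness of $\mathscr{S}$ and of each $N_3(\mathscr{S})$ (local finiteness) then bounds the number of vertices that must be absorbed, forcing termination.

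Once the process stabilises we have a cycle $C'$ with $T\subseteq V(C')$, which is~(i), and property~(iii) holds by the invariant maintained throughout. For~(ii), I would take $M_j := (V(M_j^\circ))$ to be the vertex set on the "$K_j$-side" of the two crossing edges of $C'$ through $\delta(M_j)$; the containment $V(K_j)\setminus N(S_j) \subseteq M_j \subseteq V(K_j)\cup\mathscr{S}\cup N(\mathscr{S})$ follows because everything strictly inside $K_j$ and away from $S_j$ is, by construction, enclosed, while nothing outside $K_j\cup\mathscr{S}\cup N(\mathscr{S})$ is, and $|E(C')\cap\delta(M_j)|=2$ by the crossing-count we maintained. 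The main obstacle I anticipate is exactly the bookkeeping in step~(1): each individual application of Lemma~\ref{Asra-enlarge} is local, but a (iii)-extension can delete an edge $yy^+$ far from the target $v$, so one must argue — using Lemma~\ref{complete} to understand the adjacency structure near $S_j$, and the bipartition-like structure of $G-S_j$ from Proposition~\ref{2 comp} — that such deletions never increase the crossing number of any $\delta(M_j)$ beyond $2$ and never touch the frozen edges in $E(C-N(N(C)))$. Getting the extensions to respect all of these constraints simultaneously, perhaps by always choosing the target $v$ to lie as "close to $K_0$" as possible among the not-yet-absorbed vertices of $T$, is the heart of the proof.
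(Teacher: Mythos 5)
Your high-level plan is the right one and matches the paper's: invoke Lemma~\ref{struct_toll} to get $K_0, K_1,\dots,K_k$ and $S_1,\dots,S_k$, grow $C$ by extensions from Lemma~\ref{Asra-enlarge}, use Lemma~\ref{complete} near the separators, and certify statement~(ii) by tracking how often the cycle crosses a cut around each $K_j$. But the proposal defers exactly the two steps where the actual work lies, and as written neither would go through. First, "absorb any missing vertex of $V(K_0)\cup\mathscr{S}\cup N_3(\mathscr{S})$ that lies in the neighbourhood of the current cycle" gives you no control over which type of extension Lemma~\ref{Asra-enlarge} produces: a (ii)-extension with target in $K_0$ can drag in an unwanted vertex $x\in\mathscr{S}$, and extensions with targets in $\mathscr{S}$ or in $N_3(\mathscr{S})\cap K_j$ can make the cycle enter and leave $K_j$ several times, destroying any hope of exactly two crossings. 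The paper avoids this by a staged construction: it first fills all of $K_0$ (showing via Lemma~\ref{complete} that any (ii)-extension escaping into $\mathscr{S}$ can be replaced by a (i)-extension inside $K_0$), then enters each $K_j$ exactly once through a controlled pair $s^j_1,s^j_2\in S_j$ joined by a path whose interior lies in $K_j$ (a case analysis on the extension type, again leaning on Lemma~\ref{complete}), then absorbs a fixed finite tree $T_j\supseteq N_3(S_j)\cap V(K_j)$ inside each $K_j$, proving inductively that each intermediate cycle meets $\delta(S_j\cup V(K_j))$ exactly twice. Defining $M_j$ a posteriori as "the $K_j$-side of the two crossing edges" presupposes this two-crossing property rather than establishing it, and does not by itself yield the required inclusions $V(K_j)\setminus N(S_j)\subseteq M_j\subseteq V(K_j)\cup\mathscr{S}\cup N(\mathscr{S})$; the paper instead starts from $M^0_j=S_j\cup V(K_j)$ and updates it explicitly at each later step.

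Second, you locate the role of claw-freeness entirely in Lemma~\ref{complete}/Proposition~\ref{2 comp}, but there is a separate, essential use that your sketch misses: absorbing a vertex $u\in\mathscr{S}$ none of whose neighbours on the current cycle are consecutive. There Lemma~\ref{Asra-enlarge} alone does not suffice; the paper uses claw-freeness directly to see that every cycle-neighbour $w$ of $u$ has $w^+w^-\in E(G)$, shortcuts the cycle at all such $w$ except one to force a (ii)-extension $w_1uhw_2$, and then re-splices (deleting $h^+hh^-$ and inserting $h^+h^-$ if $h$ was already on the cycle), updating each $M_j$ by adding or removing $\{u,h\}$ according to which side $w_1,w_2$ lie on. This step is also the source of the deletions that threaten both the frozen edges $E(C-N(N(C)))$ and the crossing counts, and it is where the hypothesis $V(C)\setminus N(N(C))\neq\emptyset$ is actually needed (to guarantee both sides of each cut retain cycle vertices far from $\mathscr{S}$). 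Without an argument here, statements~(i), (ii) and the second half of~(iii) are not established.
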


\begin{proof}
First we construct an extension sequence $(C_i)$ of $C$ where the vertex set of the last element in this sequence consists precisely of $V(K_0)$.
We begin by setting $C_0 = C$.
Note that ${V(C) \subseteq V(K_0)}$ is true by choice of $K_0$ and Lemma~\ref{struct_toll}.

Assume we have already constructed an extension sequence of $C$ of length $m+1$ for $m \geq 0$ such that ${V(C_m) \subseteq V(K_0)}$ holds.
If ${V(C_{m}) = V(K_0)}$ holds, we are done.
Otherwise ${V(C_{m}) \neq V(K_0)}$ is valid.
Then there exists a vertex ${v \in N(C_m) \cap V(K_0)}$.
Choose this vertex as the target of an extension $C'_m$ of $C_m$.
We can find such an extension by Lemma~\ref{Asra-enlarge}.
If $V(C'_m)$ is still a subset of $V(K_0)$, we set $C_{m+1} = C'_m$. Otherwise $C'_m$ is a (ii)-extension of $C_m$ and contains a vertex ${x \in \mathscr{S} \setminus V(C_m)}$.
Say ${x \in S_j}$ for some $j$ with ${1 \leq j \leq k}$.
We know that $C'_m-x$ lies entirely in one component of $G-S_j$ because $C'_m-x$ is connected and contains no vertex of the separator $S_j$.
Since $G$ is a connected claw-free graph and $S_j$ is a minimal vertex separator in $G$, we obtain by Lemma~\ref{complete} that the neighbourhood of $S_j$ in each component of $G-S_j$ induces a complete graph.
So $v$ and the other neighbour of $x$ in $C'_m$ are adjacent.
Hence, there exists a (i)-extension of $C_m$ with target $v$ which we choose as $C_{m+1}$. Since $K_0$ is finite by Lemma~\ref{struct_toll}, there has to be an integer $n$ such that ${V(C_n) = V(K_0)}$.

Now we construct a sequence $(\tilde{C}_{i})$ of cycles with ${0 \leq i \leq k}$ such that the following properties hold:

\begin{itemize}
\item ${V(K_0) \subseteq V(\tilde{C}_{i})}$ for every $i$ with ${0 \leq i \leq k}$.
\item For every $j$ with ${1 \leq j \leq k}$ the cycle $\tilde{C}_{j}$ contains precisely two vertices ${s^{\ell}_1, s^{\ell}_2}$ from $S_\ell$ and an $s^{\ell}_1$--$s^{\ell}_2$ path $P_\ell$ which satisfies ${V(\mathring{P}_\ell) \neq \emptyset}$ and ${V(\mathring{P}_\ell) \subseteq V(K_{\ell})}$ for every $\ell$ with ${1 \leq \ell \leq j}$.
\item $\tilde{C}_{j}$ contains no vertices from ${\bigcup^k_{p=j+1} (S_p \cup K_p)}$ for every $j$ with ${0 \leq j \leq k}$.
\end{itemize}
We start to build this sequence by setting ${\tilde{C}_{0} = C_n}$.
For every $j$ with ${1 \leq j \leq k}$ let $T_j$ be a finite tree in $K_j$ which contains all vertices from ${N_3(S_j) \cap V(K_j)}$.
Such trees exist because $K_j$ is connected and $N_3(S_j)$ is finite since $G$ is locally finite and $S_j$ is finite.
We use these trees not only in the construction of the sequence $(\tilde{C}_{i})$ but also afterwards.

Now assume we have already constructed such a sequence of length $m+1$ for $m \geq 0$. Let $D$ be an extension of $\tilde{C}_{m}$ with target ${s^{m+1}_1 \in S_{m+1}}$.
We distinguish two cases:

\setcounter{case}{0}
\begin{case}
\textnormal{$D$ is a (i)- or (iii)-extension of $\tilde{C}_{m}$.}
\end{case}

Note for this case that $D$ contains only one vertex of ${S_{m+1} \cup V(K_{m+1})}$ by definition of (i)- or (iii)-extension, respectively, and because $\tilde{C}_{m}$ contains no vertex of ${S_{m+1} \cup V(K_{m+1})}$ by construction.
All paths $P_{i}$ of $\tilde{C}_{m}$ stay the same for $D$ for all $i$ with ${1 \leq i \leq m}$.
To check that this is valid, suppose we lose an edge of some path $P_{i'}$ with ${1 \leq i' \leq m}$.
If $D$ is a (i)-extension, there is precisely one edge in ${E(\tilde{C}_{m}) \setminus E(D)}$.
The endvertices of this edge are endvertices of the two edges in ${E(D) \setminus E(\tilde{C}_{m})}$, which are both incident with $s^{m+1}_1$. Since each edge of $P_{i'}$ has at least one endvertex in $K_{i'}$, the cycle $D$ contains at least one edge which is incident with ${s^{m+1}_1 \in S_{m+1}}$ and a vertex $v$ in $K_{i'}$.
This is a contradiction because ${i' \neq m+1}$ and so $s^{m+1}_1$ and $v$ lie in different components of $G - S_{i'}$ by definition of $S_{i'}$ and $K_{i'}$ together with Lemma~\ref{struct_toll}.

If $D$ is a (iii)-extension, ${E(D) \setminus E(\tilde{C}_{m})}$ contains precisely three edges, of which two, say $f$ and $g$, are incident with $s^{m+1}_1$ by definition of (iii)-extension with target $s^{m+1}_1$.
Let $e$ be the edge in ${E(D) \setminus E(\tilde{C}_{m})}$ which is different from $f$ and $g$.
The set ${E(\tilde{C}_{m}) \setminus E(D)}$ contains precisely two edges, of which each has a common endvertex with $e$ and with either $f$ or $g$.
So at least one of the edges in ${E(\tilde{C}_{m}) \setminus E(D)}$ lies on $P_{i'}$ and has therefore an endvertex in $K_{i'}$.
With the same argument as for $D$ being a (i)-extension, we know that $f$ and $g$ cannot have endvertices in $K_{i'}$.
So $e$ has one endvertex $y^+$ in $K_{i'}$ and since all vertices of ${V(\tilde{C}_{m}) \cap (S_{i'} \cup V(K_{i'}))}$ lie on the path $P_{i'}$ whose endvertices lie in $S_{i'}$, we get that $y$ is an endvertex of either $f$ or $g$ which lies in $S_{i'}$ by definition of (iii)-extension.
So the other endvertex of $e$ lies in another component of $G - S_{i'}$ than $K_{i'}$.
This contradicts the fact that $S_{i'}$ is a separator.

Now we pick an extension $D_1$ of $D$ with target $a \in N(s^{m+1}_1) \cap V(K_{m+1})$ (see Figure~4).
Since the cycle $D$ contains only one vertex of ${S_{m+1} \cup V(K_{m+1})}$ and $S_{m+1}$ is a separator, we know that $a$ has only one neighbour on $D$.
Hence, $D_1$ must be a (ii)-extension of $D$ and the neighbour $s^{m+1}_2$ of $a$ in $D_1$ which does not lie in $D$ must be an element of $S_{m+1}$.
Otherwise, $S_{m+1}$ would not separate ${a \in V(K_{m+1})}$ from $V(K_0)$, which is a contradiction.
Now set
$$\tilde{C}_{m+1} = D_1 \; \textnormal{  and  } \; P_{m+1} = s^{m+1}_1as^{m+1}_2.$$
For every $i$ with $1 \leq i \leq m$, we take the same paths as for $\tilde{C}_m$.
This setting fulfills the required conditions because
$$V(\tilde{C}_{m+1}) \setminus V(D) \subseteq S_{m+1} \cup V(K_{m+1}) \; \textnormal{  and  } \; E(D) \setminus E(\tilde{C}_{m+1}) = \lbrace e \rbrace$$
where the edge $e$ does not lie on any path $P_i$ for $1 \leq i \leq m$ since it is incident \linebreak with $s^{m+1}_1$.
This completes the observation of Case~1.

\begin{figure}[htbp]
\centering
\includegraphics[width=10cm]{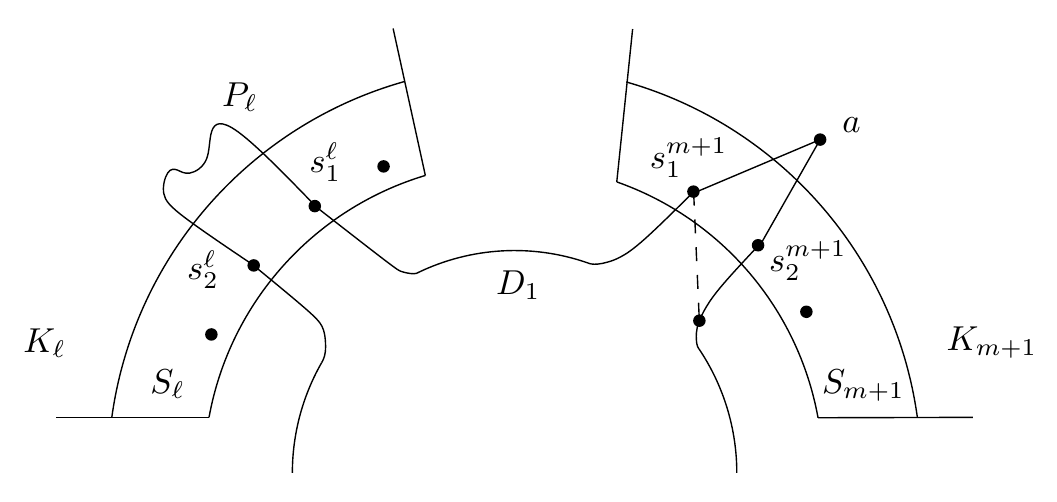}
\caption{Situation in Case~1.}
\label{lemma_4_3_case_1}
\end{figure}

\begin{case}
\textnormal{$D$ is a (ii)-extension of $\tilde{C}_{m}$.}
\end{case}

Let $x$ be the neighbour of $s^{m+1}_1$ in $D$ which does not lie in $V(\tilde{C}_{m})$ and let $ys^{m+1}_1xz$ be the path which replaces the edge $yz$ in $\tilde{C}_{m}$ to form $D$.
We know by construction that
$$V(K_0) \subseteq V(\tilde{C}_{m}) \; \textnormal{  and  } \; V(\tilde{C}_{m}) \cap S_{m+1} = \emptyset$$
hold.
Furthermore, ${x \in N(s^{m+1}_1) \cap N(\tilde{C}_{m})}$ is valid by definition of (ii)-extension.
Hence, $x$ is an element of $S_{\ell}$ for some $\ell$ with ${1 \leq \ell \leq k}$.
Here we distinguish three subcases:

\begin{subcase}
\textnormal{The equation $\ell = m+1$ holds.}
\end{subcase}

In this situation, we pick two vertices $a, b$ such that
$$a \in N(s^{m+1}_1) \cap V(K_{m+1})  \; \textnormal{  and  } \;  {b \in N(x) \cap V(K_{m+1})}.$$
Now we obtain a cycle $D'$ by replacing the edge $s^{m+1}_1x$ of $D$ by the path
$$P_{m+1} = s^{m+1}_1aT_{m+1}bx.$$
In this case, $x$ plays the role of $s^{m+1}_{2}$.
By setting $\tilde{C}_{m+1} = D'$ and taking $P_{m+1}$ together with all paths $P_i$ of $\tilde{C}_{m}$ for every $i$ with ${1 \leq i \leq m}$, all required properties are fulfilled.

\begin{subcase}
\textnormal{The relations $\ell \leq m$ and $z \in S_{\ell}$ are valid.}
\end{subcase}

In this case, we know that $y$ is no element of $S_{\ell}$ because $\tilde{C}_{m}$ contains only one vertex ${s^{\ell} \in S_{\ell}}$ that is different from $z$ by construction of $\tilde{C}_{m}$, but $s^{\ell}$ is no neighbour of $z$ in $\tilde{C}_{m}$ as $y$ is because of the $s^{\ell}$--$z$ path $P_{\ell}$ of $\tilde{C}_{m}$, which consists not just of one edge (see Figure~5).
Furthermore, $y$ is no element of $K_{\ell}$ because otherwise the edge $ys^{m+1}_1$ would show that $S_{\ell}$ does not separate $K_{\ell}$ from the component of $G - S_{\ell}$ which contains $s^{m+1}_1$.
This contradicts the definition of $S_{\ell}$.
By construction of $\tilde{C}_{m}$, we know that $\tilde{C}_{m}$ and, therefore, also $D$ contain only one $s^{\ell}$--$z$ path, namely $P_{\ell}$, whose interior vertices lie in $K_{\ell}$.
Now we move on with a slightly different cycle. In $D$, we replace the path ${s^{\ell}P_{\ell}zx}$ by the path ${s^{\ell}a'T_{\ell}b'x}$ where
$$a' \in N(s^{\ell}) \cap V(K_{\ell})  \; \textnormal{  and  } \;  b' \in N(x) \cap V(K_{\ell})$$
to obtain a new cycle $D''$.
Since $D''$ contains only one vertex of $S_{m+1}$ and still precisely two vertices of $S_i$ that are joined by a path through $K_i$ for every $i \leq m$, we can proceed as we did with $D$ in Case~1 and get the desired cycle $\tilde{C}_{m+1}$.

\begin{figure}[htbp]
\centering
\includegraphics[width=9.5cm]{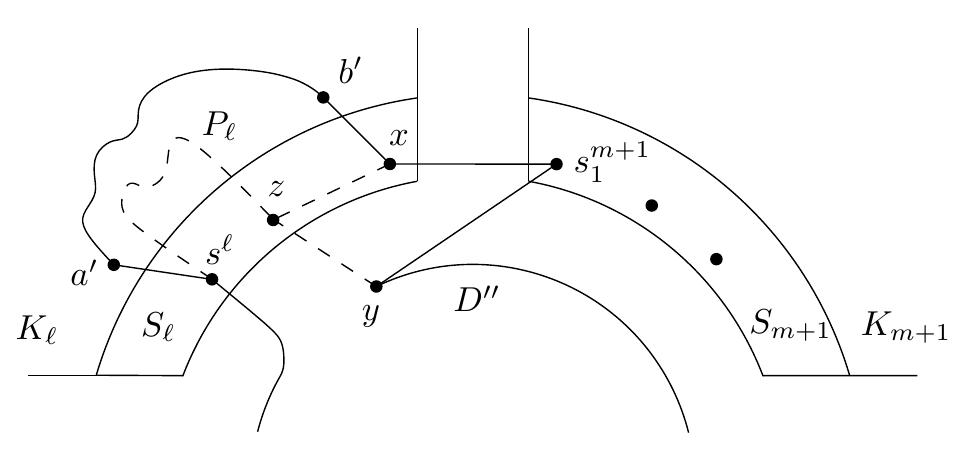}
\caption{Situation in Subcase~2.2.}
\label{lemma_4_3_case_2_2}
\end{figure}

\begin{subcase}
\textnormal{The relations $\ell \neq m+1$ and $z \notin S_{\ell}$ are true.}
\end{subcase}

If $z$ lies not in $K_{\ell}$, the vertices $s^{m+1}_1$ and $z$ are in the same component of $G-S_{\ell}$ and are neighbours of $x \in S_{\ell}$.
So by Lemma~\ref{complete}, the vertices $s^{m+1}_1$ and $z$ are adjacent.
Now we use the (i)-extension of $\tilde{C}_{m}$ which is formed by replacing the edge $yz$ of $\tilde{C}_{m}$ by the path ${ys^{m+1}_1z}$ instead of $D$ and proceed as in Case~1.
If $z$ is a vertex of $K_{\ell}$, we get that $y$ lies in $S_{\ell}$ because $y$ and $z$ are consecutive in $\tilde{C}_{m}$ and $y$ cannot be an element of $K_{\ell}$ since otherwise, the edge $ys^{m+1}_1$ would connect $K_{\ell}$ with the component of $G - S_{\ell}$ which contains $s^{m+1}_1$ and contradict the definition of $S_{\ell}$ (see Figure~6).
By construction, we know that the neighbour $w$ of $y$ in $\tilde{C}_{m}$ which is different from $z$ is no vertex of ${V(K_{\ell}) \cup S_{\ell}}$.
Hence, $s^{m+1}_1$ and $w$ lie in the same component of $G-S_{\ell}$ and are neighbours of $y$.
As before, Lemma~\ref{complete} implies that $s^{m+1}_1$ and $w$ are adjacent.
So we can use the (i)-extension of $\tilde{C}_{m}$ which arises by replacing the edge $wy$ of $\tilde{C}_{m}$ by the path $ws^{m+1}_1y$ instead of $D$ and proceed again as in Case~1.
Now the description of how to deal with Case~2 is complete too and we get the desired sequence $(\tilde{C}_{i})$ of cycles with $0 \leq i \leq k$.

\begin{figure}[htbp]
\centering
\includegraphics[width=9.5cm]{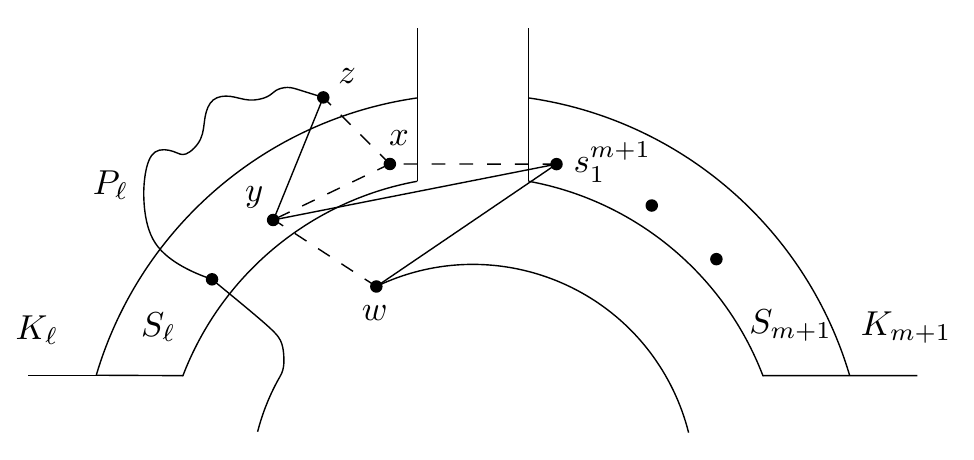}
\caption{Situation in Subcase~2.3 if $z$ lies in $K_{\ell}$.}
\label{lemma_4_3_case_2_3}
\end{figure}

For the next step, let $(\hat{C}_{i})$ be an extension sequence of $\tilde{C}_{k}$ where the targets are always chosen from ${\bigcup^k_{p=1} T_{p}}$ until a cycle of this sequence contains all vertices of ${\bigcup^k_{p=1} T_{p}}$.
The targets can always be chosen from $\bigcup^k_{p=1} T_{p}$ because each $T_p$ is connected and $\tilde{C}_{k}$ contains at least one vertex of each $T_p$ by construction.
Furthermore, we build only finitely many extensions since each tree $T_p$ is finite.
Now we prove the following claim:

\setcounter{claim}{0}
\begin{claim}
Each cycle $\hat{C}_{i}$ of the extension sequence hits the cut ${\delta(S_{j} \cup V(K_{j}))}$ precisely twice for each $j$ with ${1 \leq j \leq k}$.
\end{claim}

We prove this statement inductively.
We know that ${\hat{C}_{0} = \tilde{C}_{k}}$ fulfills the condition by its construction.
So assume that $\hat{C}_{n}$ fulfills the condition for some $n$ with ${0 \leq n \leq k-1}$ and consider $\hat{C}_{n+1}$.
Let ${t \in T_{j'}}$ be the target of the extension $\hat{C}_{n+1}$ for some $j'$ with ${1 \leq j' \leq k}$.
Since $\hat{C}_{n+1}$ is a cycle that has vertices in ${S_{j'} \cup V(K_{j'})}$ and ${V \setminus (S_{j'} \cup V(K_{j'}))}$, the cycle $\hat{C}_{n+1}$ must hit the cut ${\delta(S_{j'} \cup V(K_{j'}))}$ at least twice and in an even number of edges.
Furthermore, we know that both edges of $\hat{C}_{n+1}$ which are incident with $t$ are no elements of the cut ${\delta(S_{j'} \cup V(K_{j'}))}$ because all neighbours of $t$ lie in ${V(K_{j'}) \cup S_{j'}}$.
Using the induction hypothesis, we get that $\hat{C}_{n}$ hits the cut ${\delta(S_{j'} \cup V(K_{j'}))}$ precisely twice.
Then, by the definitions of the three types of extensions, $\hat{C}_{n+1}$ cannot meet the cut ${\delta(S_{j'} \cup V(K_{j'}))}$ four times, which implies that it hits the cut exactly twice.
This completes the induction and the proof of the claim.
\newline

Now we construct a sequence $(\hat{D}_i)$ of cycles where the vertex set of the last cycle of this sequence contains all elements of $\mathscr{S}$ and the following properties hold for each $i \geq 0$ if the corresponding cycles are defined:
\vspace{2pt}

\begin{itemize}
\setlength\itemsep{3pt}
\item ${V(K_0) \cup N_3(\mathscr{S}) \subseteq V(\hat{D}_i)}$.
\item ${V(\hat{D}_{i+1}) \setminus V(\hat{D}_{i}) \subseteq \mathscr{S}}$.
\item $1 \leq |V(\hat{D}_{i+1}) \setminus V(\hat{D}_{i})| \leq 2$.
\end{itemize}

\vspace{2pt}
\noindent Furthermore, for every $i \geq 0$ such that $\hat{D}_{i}$ is defined there shall exist vertex sets $M^i_j$ for ${1 \leq j \leq k}$ such that the following properties are fulfilled:
\vspace{2pt}

\begin{itemize}
\setlength\itemsep{3pt}
\item $V(K_j) \setminus N(S_j) \subseteq M^i_j \subseteq V(K_j) \cup \mathscr{S} \cup N(\mathscr{S})$.
\item $|E(\hat{D}_{i}) \cap \delta(M^i_j)| = 2$.
\end{itemize}

\vspace{2pt}
We begin by setting ${\hat{D}_{0} = \hat{C}_{k}}$.
We know that ${V(K_0) \cup N_3(\mathscr{S}) \subseteq V(\hat{D}_{0})}$ holds by construction.
Additionally, Claim~1 implies that ${M^{0}_{j} = S_{j} \cup V(K_{j})}$ is a valid choice for every $j$ with ${1 \leq j \leq k}$.

Now assume we have already constructed the sequence up to $\hat{D}_{m}$ and there is still a vertex ${u \in \mathscr{S} \setminus V(\hat{D}_{m})}$, say ${u \in S_{i'}}$ for some $i'$ with ${1 \leq i' \leq k}$.
If $u$ has two neighbours $w_1, w_2$ which are adjacent in $\hat{D}_{m}$, we define $\hat{D}_{m+1}$ as the (i)-extension of $\hat{D}_{m}$ where the edge $w_1w_2$ is replaced by the path $w_1uw_2$.
We define the sets $M^{m+1}_{j}$ as follows for every $j$ with ${1 \leq j \leq k}$:
\[M^{m+1}_{j} = 
\begin{cases}
M^{m}_{j} \cup \lbrace u \rbrace &\mbox{if } w_1 \in M^{m}_{j}  \textnormal{ or }  w_2 \in M^{m}_{j} \\
M^{m}_{j} \setminus \lbrace u \rbrace & \mbox{otherwise}.
\end{cases} \]
All required conditions are fulfilled by this definition.

So let us assume that $u$ does not have two neighbours which are consecutive in $\hat{D}_{m}$.
Since $G$ is claw-free, we know that for every vertex ${w \in N(u) \cap V(\hat{D}_{m})}$, the vertices $w^+$ and $w^-$ are adjacent in $G$.
Now let ${w_1 \in N(u) \cap V(\hat{D}_{m})}$ be fixed and consider the cycle $\hat{D}^u_{m}$ which is formed by replacing the path $w^+ww^-$ in $\hat{D}_{m}$ by the edge $w^+w^-$ for every ${w \in (N(u) \cap V(\hat{D}_{m})) \setminus \lbrace w_1 \rbrace}$.
By Lemma~\ref{Asra-enlarge}, there exists an extension of $\hat{D}^u_{m}$ with target $u$, but since $w_1$ is the only neighbour of $u$ on $\hat{D}^u_{m}$, all extensions of $\hat{D}^u_{m}$ with target $u$ must be (ii)-extensions.
So let there be a (ii)-extension of $\hat{D}^u_{m}$ with target $u$ where the edge ${w_1w_2 \in E(\hat{D}^u_{m})}$ is replaced by the path $w_1uhw_2$.
If ${h \notin V(\hat{D}_{m})}$ holds, then there is also a (ii)-extension of $\hat{D}_{m}$ which we set as $\hat{D}_{m+1}$.
The sets $M^{m+1}_{j}$ are defined for every $j$ with ${1 \leq j \leq k}$ in the following way:
\[M^{m+1}_{j} = 
\begin{cases}
M^{m}_{j} \cup \lbrace u, h \rbrace &\mbox{if } w_1 \in M^{m}_{j}  \textnormal{ or }  w_2 \in M^{m}_{j} \\
M^{m}_{j} \setminus \lbrace u, h \rbrace & \mbox{otherwise}.
\end{cases} \]
Using this definition, all required conditions are again fulfilled.

It remains to handle the case when ${h \in V(\hat{D}_{m})}$ is true.
In this situation, we build $\hat{D}_{m+1}$ as follows.
We take $\hat{D}_{m}$, replace the path $h^+hh^-$ by the edge $h^+h^-$ and the edge $w_1w_2$ by the path $w_1uhw_2$ (see Figure~7).
Furthermore, we set $M^{m+1}_{j}$ as before in the case with ${h \notin V(\hat{D}_{m})}$ for every $j$ with ${1 \leq j \leq k}$.

We make some remarks to see that
$$|E(\hat{D}_{m+1}) \cap \delta(M^{m+1}_j)| = 2$$
holds for every $j$ with $1 \leq j \leq k$.
Note at first that neither
$$V(\hat{D}_{m+1}) \cap M^{m+1}_{j} = \emptyset  \; \textnormal{  nor  } \;  V(\hat{D}_{m+1}) \setminus M^{m+1}_{j} = \emptyset$$
because ${V(\hat{D}_{m}) \cap M^{m}_{j}}$ as well as ${V(\hat{D}_{m}) \setminus M^{m}_{j}}$ contains vertices with distance at least $2$ to $\mathscr{S}$.
These facts are due to the relation
$$V(T_j) \subseteq V(\hat{D}_{m})$$
and our assumption on $C$ that
$$V(C) \setminus N(N(C)) \neq \emptyset$$
combined with the relation
$$V(C) \subseteq V(\hat{D}_{m}).$$
Next, note that if the path $w_1uhw_2$ meets some cut $\delta(M^{m+1}_{j})$, it can meet it only once by definition of $M^{m+1}_{j}$ and then the edge $w_1w_2$ must be an element of $\delta(M^{m}_{j})$.
Similarly, if ${h^+h^- \in \delta(M^{m+1}_{j})}$ holds, the path $h^+hh^-$ meets the cut $\delta(M^{m}_{j})$ only once.
Both edges of this path cannot lie in the cut $\delta(M^{m}_{j})$ because
$$|E(\hat{D}_{m}) \cap \delta(M^{m}_{j})| = 2$$
holds and so $h$ would be the only vertex in
$$V(\hat{D}_{m}) \cap M^{m}_{j}   \; \textnormal{  or  } \;   V(\hat{D}_{m}) \setminus M^{m}_{j}.$$
This would be a contradiction since both of these sets contain vertices with distance at least $2$ to $\mathscr{S}$.
We know that $\hat{D}_{m+1}$ meets the cut $\delta(M^{m+1}_{j})$ at least twice because $\hat{D}_{m+1}$ is a cycle that has vertices in $M^{m+1}_{j}$ and ${V \setminus M^{m+1}_{j}}$.
Since
$$|E(\hat{D}_{m}) \cap \delta(M^{m}_j)| = 2$$
holds by construction and $\hat{D}_{m+1}$ is formed from $\hat{D}_{m}$ be deleting the edges $w_1w_2$, $h^+h$ and $hh^-$ but adding $w_1u$, $uh$, $hw_2$ and $h^+h^-$, the equation
$$|E(\hat{D}_{m+1}) \cap \delta(M^{m+1}_j)| = 2$$
is valid.
So the cycle $\hat{D}_{m+1}$ and the sets $M^{m+1}_j$ fulfill all required conditions.

Since $G$ is locally finite and $\mathscr{S}$ is a subset of $N(C)$, we know that $\mathscr{S}$ is finite.
Then there must exist an integer $M$ such that $\hat{D}_{M}$ contains all vertices of ${V(K_0) \cup \mathscr{S} \cup N_3(\mathscr{S})}$.
We set
$$C' = \hat{D}_{M}  \; \textnormal{  and  } \;   M_j = M^{M}_j$$
for every $j$ with $1 \leq j \leq k$.
The cycle $C'$ and the sets $M_j$ show that statement~(i) and (ii) of the lemma are true.

\begin{figure}[htbp]
\centering
\includegraphics[width=10.5cm]{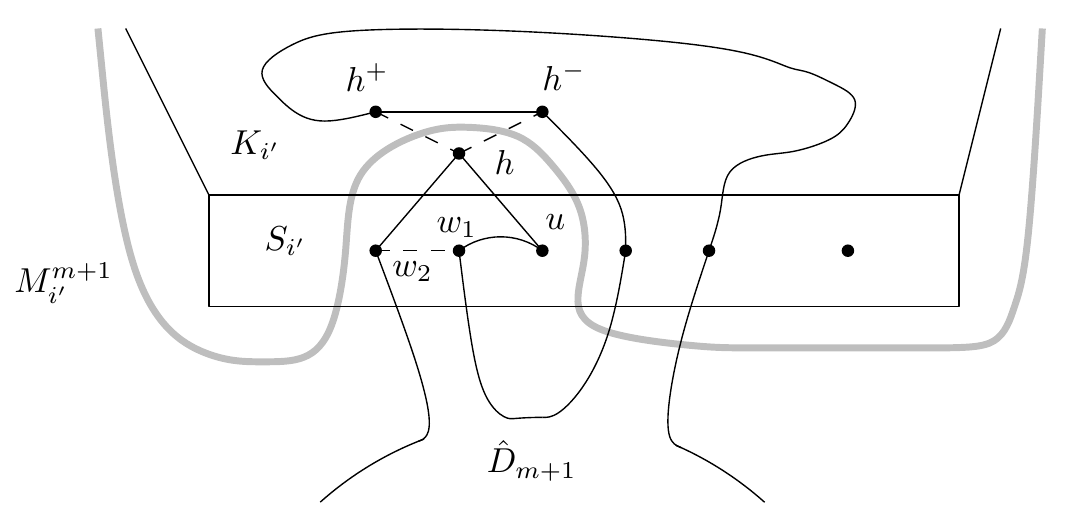}
\caption{The cycle $\hat{D}_{m+1}$ for the case $h \in V(\hat{D}_m)$.}
\label{lemma_4_3_M_j}
\end{figure}

It remains to check that statement~(iii) of the lemma holds for the cycle $C'$.
Note for the inclusion
$${E(C - N(N(C))) \subseteq E(C')}$$
that if we have lost edges of the cycle $C$, then they have at least one endvertex in $N(N(C))$ because of the definition of extension and the operation where we replaced a path $h^+hh^-$ by the edge $h^+h^-$ and $h$ is a neighbour of some vertex in $\mathscr{S}$.
This shows the first part of statement~(iii).

Note for the other part of statement~(iii) that we got edges in ${E(C') \setminus E(C)}$ only by building extensions with targets in ${V \setminus V(C)}$, by taking paths whose vertices lie entirely in ${V \setminus V(C)}$ as in Case~2 and by replacing paths $h^+hh^-$ by the edge $h^+h^-$ where $h$ lies on some cycle and is a neighbour of some vertex in ${\mathscr{S} \subseteq N(C)}$.
Since $h^+$ and $h^-$ are neighbours of $h$, we get that
$$\lbrace h^+, h^- \rbrace \subseteq N_2(N(C)) \cup N(C).$$
Next let us check the location of the relevant edges of extensions.
Let $Z'$ be an extension of some cycle $Z$ in $G$ with target in ${V \setminus V(C)}$.
Then we know for each edge ${e = uv \in E(Z') \setminus E(Z)}$ that
$$\lbrace u, v \rbrace \subseteq (V \setminus V(C)) \cup N_2(N(C))$$
holds by the definition of extension.
Putting these observations together, statement~(iii) is completely proved.
So the proof of the whole lemma is done.
\end{proof}

Having Lemma~\ref{Asra-cut-1} in our toolkit, we now prove Theorem~\ref{Asra-loc-fin}.
As mentioned before, the rough idea of the proof is to construct a sequence of cycles together with certain vertex sets such that we get a Hamilton circle as a limit object from the sequence of cycles using Lemma~\ref{HC-extract}.
For the construction of these objects, we use Lemma~\ref{Asra-cut-1}. 

\begin{proof}[Proof of Theorem~\ref{Asra-loc-fin}]
Let $G = (V, E)$ be a locally finite, connected, claw-free graph which satisfies $(\ast)$ and has at least three vertices.
We may assume that $G$ is infinite because for finite $G$ the statement follows from Theorem~\ref{Asra-fin}.
First we define a sequence $(C_i)_{i \in \mathbb{N}}$ of cycles of $G$ such that
$$V(C_i) \setminus N(N(C_i)) \neq \emptyset$$
holds for every ${i \in \mathbb{N}}$.
Additionally, we define an integer sequence ${(k_i)_{i \in \mathbb{N} \setminus \lbrace 0 \rbrace}}$ and vertex sets ${M^i_j \subseteq V(G)}$ where ${i \in \mathbb{N} \setminus \lbrace 0 \rbrace}$ and for every such $i$ the inequality chain ${1 \leq j \leq k_i}$ is satisfied.

We start by taking an arbitrary cycle as $\tilde{C}$.
Note that $G$ contains a cycle since $G$ is connected, has three vertices and satisfies $(\ast)$.
The argumentation is the the same as in the beginning of the proof of Theorem~\ref{Asra-fin}.
Now take an extension sequence of $\tilde{C}$ where we choose the targets of the extensions always from $N(\tilde{C})$.
Since $G$ is locally finite, $N(\tilde{C})$ is finite and the extension sequence ends after finitely many steps.
We set $C_0$ as the last cycle of such an extension sequence of $\tilde{C}$.
Note that
$$V(\tilde{C}) \subseteq V(C_0) \setminus N(N(C_0)).$$

Now suppose we have already defined the sequence of cycles up to length $m+1$ for some ${m \geq 0}$ together with the integer sequence up to $k_m$ and the vertex sets $M^i_j$ for every ${i \leq m}$ where $j$ satisfies always ${1 \leq j \leq k_i}$.
Then let
$$\mathscr{S}^{m+1} \subseteq N(C_{m})$$
be a finite minimal vertex set such that every ray which starts in $V(C_m)$ has to meet $\mathscr{S}^{m+1}$.
Such a set exists because $G$ is locally finite, which implies that $N(C_{m})$ is finite.
Hence, we get $\mathscr{S}^{m+1}$ by sorting out vertices from $N(C_m)$.
Next we set $k_{m+1}$ as the integer we get from Lemma~\ref{struct_toll}.
Furthermore, let
$$S^{m+1}_1, \ldots, S^{m+1}_{k_{m+1}}$$
be the minimal separators and
$$K^{m+1}_0, \ldots, K^{m+1}_{k_{m+1}}$$
be the components of ${G-\mathscr{S}^{m+1}}$ we get from Lemma~\ref{struct_toll}.
Applying Lemma~\ref{Asra-cut-1} with these objects and the cycle $C_m$, we obtain a cycle which we set as $C_{m+1}$.
We also get vertex sets for every $j$ with ${1 \leq j \leq k_{m+1}}$ which we choose for the sets $M^{m+1}_j$ (see Figure~8).

We want to use Lemma~\ref{HC-extract} to prove that $G$ is Hamiltonian.
The next claim ensures that all required conditions are fulfilled to apply Lemma~\ref{HC-extract}.

\setcounter{claim}{0}
\begin{claim}
\textnormal{
\begin{enumerate}[\normalfont(a)]
\item \textit{For every vertex $v$ of $G$, there exists an integer ${j \geq 0}$ such that ${v \in V(C_i)}$ holds for every ${i \geq j}$.}
\item \textit{For every ${i \geq 1}$ and $j$ with ${1 \leq j \leq k_i}$, the cut $\delta(M^i_j)$ is finite.}
\item \textit{For every end $\omega$ of $G$, there is a function ${f : \mathbb{N} \setminus \lbrace 0 \rbrace \longrightarrow \mathbb{N}}$ such that the inclusion ${{M^{j}_{f(j)} \subseteq M^i_{f(i)}}}$ holds for all integers $i, j$ with ${1 \leq i \leq j}$ and the equation ${M_{\omega}:= \bigcap^{\infty}_{i=1} \overline{M^i_{f(i)}} = \lbrace \omega \rbrace}$ is true.}
\item \textit{${E(C_i) \cap E(C_j) \subseteq E(C_{j+1})}$ holds for all integers $i$ and $j$ with ${0 \leq i < j}$.}
\item \textit{The equations ${E(C_i) \cap \delta(M^p_j) = E(C_p) \cap \delta(M^p_j)}$ and ${|E(C_i) \cap \delta(M^p_j)| = 2}$ hold for each triple $(i, p, j)$ which satisfies ${1 \leq p \leq i}$ and ${1 \leq j \leq k_p}$.}
\end{enumerate}
}
\end{claim}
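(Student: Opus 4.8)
The plan is to verify the five statements (a)--(e), which are literally conditions (i)--(v) of Lemma~\ref{HC-extract}, so that Lemma~\ref{HC-extract} yields a Hamilton circle. I would first record the monotonicity of the construction. Since $\mathscr{S}^{m+1}\subseteq N(C_m)$ and $C_m$ is a connected subgraph avoiding $\mathscr{S}^{m+1}$, the cycle $C_m$ lies in the finite component $K^{m+1}_0$ of $G-\mathscr{S}^{m+1}$, and every vertex of $N(C_m)$ not in $\mathscr{S}^{m+1}$ lies in $K^{m+1}_0$ as well; hence by statement~(i) of Lemma~\ref{Asra-cut-1} we get $V(C_m)\cup N(C_m)\subseteq V(K^{m+1}_0)\cup\mathscr{S}^{m+1}\subseteq V(C_{m+1})$, that is, $V(C_{m+1})$ contains the closed neighbourhood of $V(C_m)$. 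In particular $V(C_m)\subseteq V(C_{m+1})$ for all $m$, and by iterating, $V(C_m)$ contains every vertex within distance $m$ of $\tilde C$, so connectedness of $G$ gives $\bigcup_m V(C_m)=V(G)$. (These inclusions also show by induction that every vertex of $\tilde C$ keeps all its neighbours inside $V(C_m)$, so $V(C_m)\setminus N(N(C_m))\supseteq V(\tilde C)\neq\emptyset$ throughout and each application of Lemma~\ref{Asra-cut-1} is legitimate.) Now (a) is immediate: each vertex enters some $V(C_m)$ and stays by monotonicity. And (d) follows too: if $e=xy\in E(C_i)\cap E(C_j)$ with $i<j$, then $x,y\in V(C_i)$, so $N(x)\cup N(y)\subseteq V(C_{i+1})\subseteq V(C_j)$, whence $x,y\notin N(N(C_j))$ and $e\in E(C_j-N(N(C_j)))\subseteq E(C_{j+1})$ by statement~(iii) of Lemma~\ref{Asra-cut-1}.

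Next I would dispose of (b) and (e), which both rest on the observation that every edge of the cut $\delta(M^p_j)$ has an endpoint within distance $1$ of $\mathscr{S}^p$: if $e=xy\in\delta(M^p_j)$ with $x\in M^p_j$, then either $x\in\mathscr{S}^p\cup N(\mathscr{S}^p)$, or $x\in V(K^p_j)\setminus N(S^p_j)$, in which case by Lemma~\ref{struct_toll} all neighbours of $x$ lie in $K^p_j$, so $y\in V(K^p_j)$, and since $y\notin M^p_j$ we get $y\in N(S^p_j)\subseteq N(\mathscr{S}^p)$. Thus both endpoints of such an edge lie within distance $2$ of $\mathscr{S}^p$, so their neighbourhoods lie within distance $3$ of $\mathscr{S}^p$ and hence inside $V(C_p)$ by statement~(i) of Lemma~\ref{Asra-cut-1}. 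For (b), finiteness of $\delta(M^p_j)$ is now clear since all its edges are incident with the finite set $\mathscr{S}^p\cup N(\mathscr{S}^p)$. For (e) I would prove by induction on $i\geq p$ that $E(C_i)\cap\delta(M^p_j)=E(C_p)\cap\delta(M^p_j)$, the latter having size $2$ by statement~(ii) of Lemma~\ref{Asra-cut-1}. The two cut edges of $C_p$ survive every later step exactly as in (d) (their endpoints' neighbourhoods sit inside $V(C_p)\subseteq V(C_i)$, so they lie in $E(C_i-N(N(C_i)))\subseteq E(C_{i+1})$); conversely, any new edge of $E(C_{i+1})\setminus E(C_i)$ has, by statement~(iii) of Lemma~\ref{Asra-cut-1}, both endpoints in $(V\setminus V(C_i))\cup N_2(N(C_i))$, and since $N(C_i)$ is disjoint from $\mathscr{S}^p\cup N_3(\mathscr{S}^p)\subseteq V(C_p)\subseteq V(C_i)$, both endpoints of a new edge lie at distance at least $2$ from $\mathscr{S}^p$ and therefore cannot form a cut edge of $\delta(M^p_j)$.

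The heart of the argument is (c). For a fixed end $\omega$ I would let $f(i)$ be the index of the unique infinite component $K^i_{f(i)}$ of $G-\mathscr{S}^i$ that contains $\omega$. Two preparatory facts do most of the work. First, the components nest: since $\mathscr{S}^i\subseteq N(C_{i-1})\subseteq V(C_i)\subseteq V(K^{i+1}_0)$, the component $K^{i+1}_{f(i+1)}$ is disjoint from $\mathscr{S}^i$, hence (being connected, infinite and containing $\omega$) contained in $K^i_{f(i)}$, and moreover $V(K^{i+1}_{f(i+1)})$ avoids $N(S^i_{f(i)})$, because $N(S^i_{f(i)})\subseteq\mathscr{S}^i\cup N_3(\mathscr{S}^i)\subseteq V(C_i)\subseteq V(K^{i+1}_0)$ is disjoint from $K^{i+1}_{f(i+1)}$. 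Second, consecutive separators are far apart: $\mathscr{S}^{i+1}\subseteq N(C_i)$ avoids $\mathscr{S}^i\cup N_3(\mathscr{S}^i)$, so $\mathscr{S}^{i+1}\cup N(\mathscr{S}^{i+1})$ lies at distance at least $3$ from $\mathscr{S}^i$ and in particular avoids $N(S^i_{f(i)})$. Using these, $M^{i+1}_{f(i+1)}\subseteq M^i_{f(i)}$ comes out as follows. Every $v\in M^{i+1}_{f(i+1)}$ lies in $V(K^{i+1}_{f(i+1)})\cup\mathscr{S}^{i+1}\cup N(\mathscr{S}^{i+1})$ by statement~(ii) of Lemma~\ref{Asra-cut-1}; if $v\in V(K^{i+1}_{f(i+1)})$ then $v\in V(K^i_{f(i)})\setminus N(S^i_{f(i)})\subseteq M^i_{f(i)}$ by the first fact; and if $v\in\mathscr{S}^{i+1}\cup N(\mathscr{S}^{i+1})$ then $v$ lies on $C_{i+1}$ (as $\mathscr{S}^{i+1}\cup N(\mathscr{S}^{i+1})\subseteq\mathscr{S}^{i+1}\cup N_3(\mathscr{S}^{i+1})\subseteq V(C_{i+1})$), and since $|E(C_{i+1})\cap\delta(M^{i+1}_{f(i+1)})|=2$ the set $M^{i+1}_{f(i+1)}\cap V(C_{i+1})$ is a single subpath of $C_{i+1}$ which also contains a vertex of $V(K^{i+1}_{f(i+1)})\setminus N(S^{i+1}_{f(i+1)})\subseteq V(K^i_{f(i)})$ and avoids $\mathscr{S}^i$, so this subpath lies entirely in $K^i_{f(i)}$; hence $v\in V(K^i_{f(i)})$, and $v\notin N(S^i_{f(i)})$ by the second fact, so again $v\in M^i_{f(i)}$. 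Finally $M_\omega:=\bigcap_i\overline{M^i_{f(i)}}=\{\omega\}$: the inclusion $V(K^i_{f(i)})\setminus N(S^i_{f(i)})\subseteq M^i_{f(i)}$ captures a tail of some ray to $\omega$, so $\omega\in\overline{M^i_{f(i)}}$ for all $i$; any vertex $z$ lies in $V(C_{j-2})\subseteq V(K^{j-1}_0)$ for all large $j$ by (a), hence outside $M^j_{f(j)}\subseteq V(K^{j-1}_{f(j-1)})$ (the last inclusion being what the nestedness argument actually establishes), so $z\notin M_\omega$; and for an end $\omega'\neq\omega$, picking a finite set $F$ separating $\omega$ from $\omega'$ and then $j$ with $F\subseteq V(C_{j-1})\subseteq V(K^j_0)$, every infinite component of $G-\mathscr{S}^j$ avoids $F$, so $\omega$ and $\omega'$ cannot both live in $K^j_{f(j)}$ (a path inside $K^j_{f(j)}$ joining their rays would avoid $F$), whence $\omega'\notin\overline{M^j_{f(j)}}$ because $M^j_{f(j)}$ differs from a subset of $V(K^j_{f(j)})$ only by a finite set.

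I expect the main obstacle to be precisely the nestedness $M^{j}_{f(j)}\subseteq M^i_{f(i)}$ demanded by (c): one must show not only that the components $K^i_{f(i)}$ shrink (which is clean) but that the boundary fuzz $M^i_{f(i)}\triangle V(K^i_{f(i)})$, which statement~(ii) of Lemma~\ref{Asra-cut-1} only localizes to $\mathscr{S}^i\cup N(\mathscr{S}^i)$, really sits on the $K^i_{f(i)}$-side of the next separator. The argument sketched above handles this by combining the fact that $\mathscr{S}^i$ and $\mathscr{S}^{i+1}$ are more than three apart (so the two bands of fuzz are disjoint and each is hidden inside the opposite cycle) with the ``single arc'' consequence of $|E(C_{i+1})\cap\delta(M^{i+1}_{f(i+1)})|=2$; getting all of this distance bookkeeping and the interplay of $V(C_i)$, the $K^i_j$ and the $M^i_j$ exactly right is the only delicate point, and (a), (b), (d), (e) then fall out routinely from monotonicity of the $C_i$ together with statements~(i) and~(iii) of Lemma~\ref{Asra-cut-1}.
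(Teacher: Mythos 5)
Your proof is correct and follows the same overall strategy as the paper: all five statements are derived from the monotonicity ${V(C_m) \cup N(C_m) \subseteq V(K^{m+1}_0) \cup \mathscr{S}^{m+1} \subseteq V(C_{m+1})}$ together with statements (i)--(iii) of Lemma~\ref{Asra-cut-1}, and your arguments for (a), (b), (d) and both inclusions of (e) match the paper's essentially step for step. The one place you genuinely diverge is the nesting step ${M^{i+1}_{f(i+1)} \subseteq V(K^i_{f(i)}) \setminus N(S^i_{f(i)})}$ in (c): the paper argues by contradiction, producing two disjoint sub-cuts $\delta(X)$ and $\delta(Y)$ of $\delta(M^{n+1}_{f(n+1)})$ each of which $C_{n+1}$ must cross at least twice, contradicting ${|E(C_{n+1}) \cap \delta(M^{n+1}_{f(n+1)})| = 2}$, whereas you use that same two-crossings property directly to conclude that ${M^{i+1}_{f(i+1)} \cap V(C_{i+1})}$ is a single arc of $C_{i+1}$ and then trace that arc into $K^i_{f(i)}$ via your two distance facts. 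Both arguments hinge on the identical key fact, so this is a variant execution rather than a different method, but yours is direct and arguably cleaner; your explicit check that ${V(\tilde{C}) \subseteq V(C_m) \setminus N(N(C_m))}$ persists at every step, which legitimises each application of Lemma~\ref{Asra-cut-1}, is also a welcome detail that the paper leaves implicit.
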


\begin{figure}[htbp]
\centering
\includegraphics[width=8.5cm]{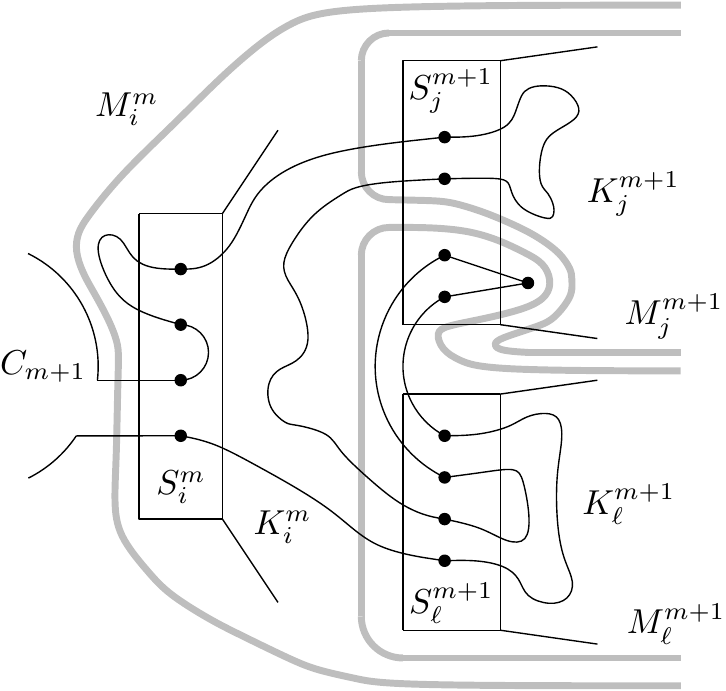}
\caption{The cycle $C_{m+1}$ and the vertex sets we got from Lemma~\ref{Asra-cut-1}.}
\label{thm_1_4_C_m+1}
\end{figure}

Note that the inclusions
$$V(K^{i}_0) \cup \mathscr{S}^{i} \cup N_3(\mathscr{S}^{i}) \subseteq V(C_{i}) \subseteq V(K^{i+1}_0)$$
and the equation ${N(K^{i}_0) = \mathscr{S}^{i}}$ hold for every ${i \geq 1}$ by definition of the cycles together with Lemma~\ref{Asra-cut-1}~(i) and by Lemma~\ref{struct_toll}.
Since $G$ is connected, statement~(a) follows.

We fix an arbitrary integer ${i \geq 1}$ and some $j$ with ${1 \leq j \leq k_i}$ for the proof of statement~(b).
By definition of the set $M^i_j$ and Lemma~\ref{Asra-cut-1}~(ii), the inclusions
$${V(K^i_j) \setminus N(S^i_j) \subseteq M^i_j \subseteq V(K^i_j) \cup \mathscr{S}^i \cup N(\mathscr{S}^i)}$$
are true.
Now the definitions of $K^i_j$ and $\mathscr{S}^i$ imply that $N(M^i_j)$ is a subset of ${V(K^i_0) \cup \mathscr{S}^i \cup N_2(\mathscr{S}^i)}$.
Using that $V(K^i_0)$ and $\mathscr{S}^i$ are finite sets by definition and that $G$ is locally finite, we obtain that $\delta(M^i_j)$ is a finite cut.

We fix an arbitrary end $\omega$ of $G$ for statement~(c).
Now we use that for every $i \geq 1$ the end $\omega$ is contained in precisely one of the closures ${\overline{K^i_{1}}, \ldots, \overline{K^i_{k_i}}}$, say ${\omega \in \overline{K^i_{j}}}$ where ${1 \leq j \leq k_i}$.
Then set ${f(i) = j}$.
First we prove that
$${M^{j}_{f(j)} \subseteq M^i_{f(i)}}$$
holds for all integers $i, j$ with ${1 \leq i \leq j}$.
For this, it suffices to show that the inclusion
$${M^{i+1}_{f(i+1)} \subseteq M^i_{f(i)}}$$
is true for every ${i \geq 1}$.
We get that the inclusions
$${V(K^{i}_{f(i)}) \setminus N(S^{i}_{f(i)}) \subseteq M^{i}_{f(i)} \subseteq V(K^{i}_{f(i)}) \cup \mathscr{S}^{i} \cup N(\mathscr{S}^{i})}$$
hold for every ${i \geq 1}$ by definition of the set $M^{i}_{f(i)}$ and Lemma~\ref{Asra-cut-1}~(ii).
Note that ${\mathscr{S}^{i+1} \cup N(\mathscr{S}^{i+1})}$ is not necessarily a subset of $V(K^{i}_{f(i)})$.
Because of this, we have to look a bit more carefully at the set $M^{i}_{f(i)}$.
For our purpose, it suffices to prove that $M^{i+1}_{f(i+1)}$ is a subset of ${V(K^{i}_{f(i)}) \setminus N(S^{i}_{f(i)})}$ for every ${i \geq 1}$.
Suppose this is not true.
Then, using the definition of $K^{n}_{f(n)}$ and $S^{n+1}_{\ell}$ together with Lemma~\ref{struct_toll}, there exists an integer ${n \geq 1}$ and an integer ${\ell \neq f(n+1)}$ with ${1 \leq \ell \leq k_{n+1}}$ such that
$${V(K^{n}_{f(n)}) \cap (S^{n+1}_{\ell} \cup N(S^{n+1}_{\ell})) = \emptyset}$$
and
$${X = M^{n+1}_{f(n+1)} \cap (S^{n+1}_{\ell} \cup N(S^{n+1}_{\ell}))} \neq \emptyset.$$
Since the inclusion
$${V(K^{n+1}_0) \cup \mathscr{S}^{n+1} \cup N_3(\mathscr{S}^{n+1}) \subseteq V(C_{n+1})}$$
holds by definition of $C_{n+1}$ and Lemma~\ref{Asra-cut-1}~(i), we get that $C_{n+1}$ has vertices in $X$ and ${V \setminus X}$.
So $C_{n+1}$ hits the cut $\delta(X)$ at least twice.
Furthermore, we know that no edge of $\delta(X)$ is an edge of $K^{n}_{f(n)}$.
The inclusion
$${M^{n+1}_{f(n+1)} \subseteq V(K^{n+1}_{f(n+1)}) \cup \mathscr{S}^{n+1} \cup N(\mathscr{S}^{n+1})}$$
and the definition of $X$ ensure that
$$\delta(X) \subseteq \delta(M^{n+1}_{f(n+1)}).$$
Hence, $E(C_{n+1})$ contains no edges of ${\delta(M^{n+1}_{f(n+1)}) \setminus \delta(X)}$ because of Lemma~\ref{Asra-cut-1}~(ii).
This yields a contradiction.
In order to show that $E(C_{n+1})$ must use at least one edge of ${\delta(M^{n+1}_{f(n+1)}) \setminus \delta(X)}$, we define the set
$${Y = M^{n+1}_{f(n+1)} \cap V(K^{n}_{f(n)})}.$$
Using the inclusions
$${V(K^{n+1}_{f(n+1)}) \setminus N(S^{n+1}_{f(n+1)}) \subseteq M^{n+1}_{f(n+1)} \subseteq V(K^{n+1}_{f(n+1)}) \cup \mathscr{S}^{n+1} \cup N(\mathscr{S}^{n+1})}$$
and the definitions of $K^{n}_{f(n)}$ and $K^{n+1}_{f(n+1)}$ together with Lemma~\ref{struct_toll}, it is ensured that $Y$ is not empty since the inclusion
$$V(K^{n+1}_{f(n+1)}) \subseteq V(K^{n}_{f(n)})$$
holds and that the inclusion
$${\delta(Y) \subseteq \delta(M^{n+1}_{f(n+1)}) \cap E(K^{n}_{f(n)})}$$
is true.
So the edge sets $\delta(X)$ and $\delta(Y)$ are disjoint.
Using the inclusion
$${V(K^{n+1}_0) \cup \mathscr{S}^{n+1} \cup N_3(\mathscr{S}^{n+1}) \subseteq V(C_{n+1})}$$
again, we obtain that $C_{n+1}$ contains vertices in $Y$ and ${V \setminus Y}$, which implies that $E(C_{n+1})$ contains at least two edges of $\delta(Y)$.
Since
$$\delta(Y) \subseteq \delta(M^{n+1}_{f(n+1)}) \setminus \delta(X),$$
we have the desired contradiction.
So the inclusion ${M^{j}_{f(j)} \subseteq M^i_{f(i)}}$ holds for all integers $i, j$ with ${1 \leq i \leq j}$.

It remains to show that ${M_{\omega} = \lbrace \omega \rbrace}$ is true.
As noted above, the inclusions
$${V(K^i_{f(i)}) \setminus N(S^i_{f(i)}) \subseteq M^i_{f(i)} \subseteq V(K^i_{f(i)}) \cup \mathscr{S}^i \cup N(\mathscr{S}^i)}$$
are true for every ${i \geq 1}$.
So $\omega$ is an element of $M_{\omega}$ by definition of the function $f$.
To show that $M_{\omega}$ contains no vertex of $G$ and no other end of $G$, fix some vertex $v \in V$ and some end ${\omega' \neq \omega}$ of $G$.
Now let $F$ be a finite set of vertices such that $\omega$ and $\omega'$ lie in closures of different components of ${G-F}$.
We take an integer ${p \geq 1}$ such that the following inclusion is fulfilled:
$$F \cup \lbrace v \rbrace \subseteq V(K^{p}_0).$$

To see that it is possible to find such an integer, note that each vertex ${w \in F \cup \lbrace v \rbrace}$ lies in some cycle $C_{\ell_w}$ where ${\ell_w \geq 0}$ by statement~(a).
The construction of the cycles and Lemma~\ref{Asra-cut-1}~(i) ensure that the inclusion
$$V(C_i) \subseteq V(C_{i+1})$$
holds for every ${i \geq 0}$.
Since ${F \cup \lbrace v \rbrace}$ is finite, we can set ${p-1}$ as the maximum of all integers $\ell_w$.
Now the definition of $K^{p}_0$ and Lemma~\ref{struct_toll} imply that $V(K^{p}_0)$ contains all vertices of ${F \cup \lbrace v \rbrace}$.

So $\omega$ and $\omega'$ are also in closures of different components of ${G-(V(K^{p}_0) \cup \mathscr{S}^p))}$.
As we have proved already, the set $M^{i+1}_{f(i+1)}$ is a subset of ${V(K^i_{f(i)}) \setminus N(S^i_{f(i)})}$ for every ${i \geq 1}$.
So $\omega'$ and $v$ do not lie in the set $\overline{M^{p+1}_{f(p+1)}}$, which implies that they cannot be elements of $M_{\omega}$.
Since each set $M^{i}_{f(i)}$ is a set of vertices, the intersection $M_{\omega}$ cannot contain inner points of edges.
Therefore, the equation ${M_{\omega} = \lbrace \omega \rbrace}$ is valid and statement~(c) is true.

To prove statement~(d), take an edge ${e \in E(C_i) \cap E(C_j)}$ for arbitrary integers $i$ and $j$ that satisfy ${0 \leq i < j}$.
By definition of the cycles together with Lemma~\ref{Asra-cut-1}~(iii) and (i), we get that both endvertices of $e$ lie in ${V(C_i) \subseteq V(K^{i+1}_0)}$ and that the inclusions
$${V(K^{i+1}_0) \cup \mathscr{S}^{i+1} \cup N_3(\mathscr{S}^{i+1}) \subseteq V(C_{i+1}) \subseteq V(C_j)}$$
hold.
Using the equation
$${N(K^{i+1}_0) = \mathscr{S}^{i+1}},$$
we obtain that
$${e \in E(C_j - N(N(C_j)))}$$
is true.
So $e$ is an element of $E(C_{j+1})$ by definition of the cycles and Lemma~\ref{Asra-cut-1}~(iii).
Therefore, statement~(d) holds.

Let us fix an arbitrary ${p \geq 1}$ and $j$ with ${1 \leq j \leq k_p}$ for statement~(e).
The equation
$$|E(C_p) \cap \delta(M^p_j)| = 2$$
is true by definition of the cycles and Lemma~\ref{Asra-cut-1}~(ii).
So proving the equation
$$E(C_p) \cap \delta(M^p_j) = E(C_i) \cap \delta(M^p_j)$$ for every $i \geq p$ suffices to show that statement~(e) holds.
First we verify the inclusion
$${E(C_p) \cap \delta(M^p_j) \subseteq E(C_i) \cap \delta(M^p_j)}.$$
We know that for every ${p \geq 1}$ where ${1 \leq j \leq k_p}$ the inclusions
$${V(K^p_{j}) \setminus N(S^p_j) \subseteq M^p_{j} \subseteq V(K^p_{j}) \cup \mathscr{S}^p \cup N(\mathscr{S}^p)}$$
are true by definition of the set $M^p_j$ and Lemma~\ref{Asra-cut-1}~(ii).
So
$$\lbrace u,v \rbrace \subseteq \mathscr{S}^p \cup N_2(\mathscr{S}^p)$$
holds for each edge ${uv \in \delta(M^p_j)}$.
By definition of the cycles and Lemma~\ref{Asra-cut-1}~(i), we know further that the inclusion
$${V(K^{p}_0) \cup \mathscr{S}^p \cup N_3(\mathscr{S}^p) \subseteq V(C_i)}$$
is true for every ${i \geq p \geq 1}$.
Hence, if $uv$ is an edge of ${E(C_i) \cap \delta(M^p_j)}$, it lies also in ${E(C_i - N(N(C_i)))}$, which implies ${uv \in E(C_{i+1})}$ by Lemma~\ref{Asra-cut-1}~(iii).
So we get inductively that
$$E(C_p) \cap \delta(M^p_j) \subseteq E(C_i) \cap \delta(M^p_j)$$
holds for every ${i \geq 1}$.

It remains to check the opposing inclusion.
We do this by showing that for every ${i \geq p}$ the cycle $C_i$ contains no edges of $\delta(M^p_j)$ but the two which are also edges of $C_p$.
For this, we use induction on $i$.
The definition of $C_p$ and Lemma~\ref{Asra-cut-1}~(ii) ensure that the statement holds for $i=p$.
Next we fix an arbitrary $i > p$ and edge ${e = uv \in \delta(M^p_j) \setminus E(C_p)}$.
Using the induction hypothesis, we get that $e$ is no edge of $C_{i-1}$.
Now suppose for a contradiction that $e$ is an edge of $C_i$.
Then the definition of $C_i$ together with Lemma~\ref{Asra-cut-1}~(iii) implies that the inclusion
$${\lbrace u, v \rbrace \subseteq (V \setminus V(C_{i-1})) \cup N_2(N(C_{i-1}))}$$
holds.
This leads towards a contradiction because we already know that the inclusion
$$\lbrace u,v \rbrace \subseteq \mathscr{S}^p \cup N_2(\mathscr{S}^p)$$
is valid.
Both inclusions cannot be true at the same time since $C_{i-1}$ contains all vertices of ${V(K^p_0) \cup \mathscr{S}^p \cup N_3(\mathscr{S}^{p})}$ by definition of the cycle and Lemma~\ref{Asra-cut-1}~(i).
This completes the induction and, therefore, shows the equation
$$E(C_p) \cap \delta(M^p_j) = E(C_i) \cap \delta(M^p_j)$$
for every ${i \geq p}$.
Hence, statement~(e) is true and the proof of the claim is complete.
\newline

As mentioned before, we can now apply Lemma~\ref{HC-extract} using the sequence of cycles $(C_i)_{i \in \mathbb{N}}$, the integer sequence $(k_i)_{i \in \mathbb{N} \setminus \lbrace 0 \rbrace}$ and the vertex sets ${M^i_j}$ for every ${i \in \mathbb{N} \setminus \lbrace 0 \rbrace}$ and $j$ with ${1 \leq j \leq k_i}$ to obtain that $G$ is Hamiltonian.
\end{proof}

Now let us discuss how the proof of Theorem~\ref{Asra-loc-fin} depends on the assumption of being claw-free.
Lemma~\ref{complete} and Lemma~\ref{struct_toll} do not have to be true if our graph contains claws.
Without the second of these lemmas, the structure of the graph is not so clear anymore, but of course we can still find for every end a sequence of separators and components which captures the end.
The harder problem is that without Lemma~\ref{complete} it is not clear how to control the growth of the sequence of cycles through the separators, which we need in order to apply Lemma~\ref{HC-extract}.
In the proof of Lemma~\ref{Asra-cut-1} we made heavy use of Lemma~\ref{complete}.
So in order to make progress towards a version of Theorem~\ref{Asra-loc-fin} which does not depend on the assumption of being claw-free, we need to find a way to control the growth of sequences of cycles given by extensions (maybe just along separators) only using property $(\ast)$.

\end{document}